\newtheoremstyle{erthm}% name
  {}%      Space above, empty = `usual value'
  {}%      Space below
  {\itshape}% Body font
  {}%         Indent amount (empty = no indent, \parindent = para indent)
  {\fontseries{bx}\selectfont\itshape}% Thm head font
  {--}%        Punctuation after thm head
  { }%     Space after thm head: " " = normal interword space;
  \newtheoremstyle{errem}% name
    {}%      Space above, empty = `usual value'
    {}%      Space below
    {}% Body font
    {}%         Indent amount (empty = no indent, \parindent = para indent)
    {\ttfamily\itshape}% Thm head font
    {--}%        Punctuation after thm head
    { }%     Space after thm head: " " = normal interword space;
\newtheoremstyle{erdfn}% name
  {}%      Space above, empty = `usual value'
  {}%      Space below
  {\itshape}% Body font
  {}%         Indent amount (empty = no indent, \parindent = para indent)
  {\fontseries{bx}\selectfont\itshape}% Thm head font
  {--}%        Punctuation after thm head
  { }%     Space after thm head: " " = normal interword space;
\theoremstyle{erthm}
\newtheorem{theoremintro}{Theorem}
\newtheorem{propintro}{Proposition}
\newtheorem{theorem}{Theorem}
\newtheorem{proposition}[theorem]{Proposition}
\newtheorem{lemma}[theorem]{Lemma}
\newtheorem{corollary}[theorem]{Corollary}
\theoremstyle{errem}
\newtheorem*{remarkintro}{Remark}
\newtheorem*{remark}{Remark}
\theoremstyle{erdfn}
\newtheorem{definition}{Definition}
\DeclarePairedDelimiter{\accol}{\{}{\}}%..................................................{.,.}
\DeclareMathOperator{\bi}{b}%..................................................................application.b
\DeclareMathOperator{\Bi}{B}%..................................................................application.B
\newcommand*{\boldmu}{\bm{\mu}}
\newcommand*{\CC}{\mathbb{C}}%.........................................................corps.complexe
\DeclarePairedDelimiter{\crochet}{\lbrack}{\rbrack}%...............................[.,.]
\DeclareMathOperator{\curl}{curl}%............................................................curl
\newcommand*{\dd}{%
  \mathop{\mathrm{d}\null}\mskip-\thinmuskip\mathord{\null}}%.............differential.d
\DeclareMathOperator{\der}{d}%................................................................derivation.d
\DeclareMathOperator{\Diff}{D}%................................................................derivation.D
\DeclareMathOperator{\dV}{v}%.................................................................derivation.v
\DeclareMathOperator{\dZ}{w}%.................................................................derivation.Z
\DeclareMathOperator{\E}{E}%....................................................................serie.Eisenstein
\newcommand*{\eexp}{\mathrm{e}}%.......................................................e.exponentiel
\renewcommand*{\epsilon}{\varepsilon}%..................................................epsilon
\newcommand*{\fQM}{\Mod_{*}^{\leq\infty}}%........................................algebre.des.quasimodulaires
\newcommand*{\fMod}{\Mod_{*}}%...........................................................algebre.des.modulaires
\DeclareMathOperator{\grad}{grad}%........................................................grad
\DeclareMathOperator{\HP}{HP}%........................................................Poisson.center
\newcommand*{\ic}{\mathrm{i}}%............................................................i.complexe
\DeclareMathOperator{\jac}{jac}%........................................................jac
\newcommand*{\Mod}{\mathcal{M}}%......................................................modulaires
\newcommand*{\N}{\mathbb{N}}%...........................................................ensemble.entiers.naturels
\DeclarePairedDelimiter{\parenth}{\lparen}{\rparen}%.............................(.,.)
\newcommand*{\pk}{\mathcal{H}}%.........................................................poincare
\newcommand*{\Q}{\mathbb{Q}}%...........................................................corps.rationnel
\DeclareMathOperator{\RC}{RC}%...............................................................Rankin.Cohen.bracket
\DeclarePairedDelimiter{\scal}{\langle}{\rangle}%.....................................<.,.>
\newcommand*{\sldz}{\mathrm{SL}(2,\mathbb{Z})}%............................SL(2,Z)
\DeclareMathOperator{\X}{X}%..................................................................X
\newcommand*{\Z}{\mathbb{Z}}%...........................................................anneau.entiers
\title[Star products on quasimodular forms]{Poisson structures and star products on quasimodular forms}
\keywords{Quasimodular forms, Poisson brackets, Rankin-Cohen brackets, formal deformation, Eholzer product, star product}
\subjclass[2010]{17B63,11F25,11F11,16W25}
\thanks{We thank François Martin and Anne Pichereau for many fruitful discussions. }
\author[F. Dumas]{Fran{\c c}ois Dumas}
\address{%
Fran{\c c}ois Dumas\\
Clermont Universit\'e\\
Universit\'e Blaise Pascal\\
Laboratoire de math\'ematiques\\
BP 10448\\
F-63000 Clermont-Ferrand\\
France %
}
\curraddr{%
Universit\'e Blaise Pascal\\
Laboratoire de math\'ematiques\\
Les C\'ezeaux\\
BP 80026\\
F-63171 Aubi\`ere Cedex\\
France %
}
\email{{francois.dumas@math.univ-bpclermont.fr}}
\author[E. Royer]{Emmanuel Royer}
\address{%
Emmanuel Royer\\
Clermont Universit\'e\\
Universit\'e Blaise Pascal\\
Laboratoire de math\'ematiques\\
BP 10448\\
F-63000 Clermont-Ferrand\\
France %
}
\curraddr{%
Universit\'e Blaise Pascal\\
Laboratoire de math\'ematiques\\
Les C\'ezeaux\\
BP 80026\\
F-63171 Aubi\`ere Cedex\\
France %
}
\email{{emmanuel.royer@math.univ-bpclermont.fr}}
\begin{document}
\mathtoolsset{showonlyrefs,mathic,centercolon}
%..............................
\begin{abstract}
We construct and classify all Poisson structures on quasimodular forms that extend the one coming from the first Rankin-Cohen bracket on the modular forms. We use them to build formal deformations on the algebra of quasimodular forms. %
\end{abstract}
\maketitle
\tableofcontents%
%.............................................
\section{Introduction}
In~\cite[Theorem 7.1]{MR0382192}, Henri Cohen defined a collection of bi-differential operators on modular forms. Let \(n\) be a positive integer, \(f\) a modular form of weight \(k\) and \(g\) a modular form of weight \(\ell\). The \(n\)-th Rankin-Cohen bracket of \(f\) and \(g\) is the modular form of weight \(k+\ell+2n\) defined by %
\[%
\RC_n(f,g)=\sum_{r=0}^n(-1)^r\binom{k+n-1}{n-r}\binom{\ell+n-1}{r}\Diff^r f\Diff^{n-r}g, \qquad \left(\Diff=\frac{1}{2\pi\ic}\frac{\dd}{\dd z}\right).
\]
The algebraic structure of these brackets has been studied in Zagier's seminal work \cite{MR1280058}. That Rankin-Cohen brackets define a formal deformation of the algebra of modular forms has been widely studied. Important contributions are~\cite{MR1403254,MR1418868,Yao_PHD,MR2319770,MR3025141,2013arXiv1301.2111K}.

In this paper, we construct formal deformations of the algebra \(\fQM\) of quasimodular forms. This algebra is generated over \(\CC\) by the three Eisenstein series \(\E_2\), \(\E_4\) and \(\E_6\). The algebra \(\Mod_*\) of modular forms is the subalgebra generated by \(\E_4\) and \(\E_6\). As a first step, we classify the admissible Poisson structures of \(\fQM\).  A Poisson bracket \(\accol{\phantom{f},\phantom{g}}\) on \(\fQM\) is admissible if %
\begin{enumerate}[i)]
\item the restriction of \(\accol{\phantom{f},\phantom{g}}\) to the algebra \(\fMod\) of modular forms is the first Rankin-Cohen bracket \(\RC_1\)
\item it satisfies \(\accol{\Mod_{k}^{\leq s},\Mod_{\ell}^{\leq t}}\subset\Mod_{k+\ell+2}^{\leq s+t}\) for any even integers \(k,\ell\) and any integers \(s\) and \(t\) 
\end{enumerate}
where \(\Mod_{k}^{\leq s}\) is the vector space of quasimodular forms of weight \(k\) and depth less than \(s\). The vector space of parabolic modular forms of weight \(12\) is one dimensional. We choose \(\Delta=\E_4^3-\E_6^2\) a generator. %

\begin{propintro}[First family of Poisson brackets]\label{prop_A}
For any \(\lambda\in\CC^*\), there exists an admissible Poisson bracket \(\accol{\phantom{f},\phantom{g}}_\lambda\) on the algebra of quasimodular forms defined by the following values on the generators: %
\[%
\accol{\E_4,\E_6}_\lambda=-2\Delta,\quad \accol{\E_2,\E_4}_\lambda=-\frac{1}{3}\left(2\E_6\E_2-\lambda\E_4^2\right),\quad \accol{\E_2,\E_6}_\lambda=-\frac{1}{2}\left(2\E_4^2\E_2-\lambda\E_4\E_6\right). %
\]
Moreover, %
\begin{enumerate}[i)]
\item for any \(\lambda\in\CC^*\), the Poisson bracket \(\accol{\phantom{f},\phantom{g}}_\lambda\) is not unimodular. 
\item\label{prop_Apoinii} The Poisson algebras \(\left(\fQM,\accol{\phantom{f},\phantom{g}}_\lambda\right)\) and \(\left(\fQM,\accol{\phantom{f},\phantom{g}}_{\lambda'}\right)\) are Poisson modular iso\-mor\-phic for all \(\lambda\) and \(\lambda'\) in \(\CC^*\).
\item For any \(\lambda\in\CC^*\), the Poisson center of \(\left(\fQM,\accol{\phantom{f},\phantom{g}}_\lambda\right)\) is \(\CC\). %
\end{enumerate}
\end{propintro}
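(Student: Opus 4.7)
Since $\fQM = \CC[\E_2,\E_4,\E_6]$ is a polynomial algebra in three algebraically independent generators, any skew-symmetric biderivation on $\fQM$ is uniquely determined by its values on pairs of generators. I will first define $\accol{\cdot,\cdot}_\lambda$ by the three prescribed formulas; it is a Poisson bracket iff the Jacobi identity holds on the single triple $(\E_2,\E_4,\E_6)$, which reduces to a direct expansion via the biderivation property in which, upon substituting $\Delta=\E_4^3-\E_6^2$, the $\lambda$-dependent and $\lambda$-independent contributions each collapse to zero. Admissibility is then automatic: condition (i) holds since $\accol{\E_4,\E_6}_\lambda = -2\Delta$ matches $\RC_1(\E_4,\E_6)$ via Ramanujan's identities for $\Diff\E_4$ and $\Diff\E_6$; and (ii) is a weight-depth inspection of each of the three prescribed formulas.

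For the non-unimodularity statement I compute the modular vector field $V_\lambda$ of $\accol{\cdot,\cdot}_\lambda$ relative to the volume form $\dd\E_2\wedge\dd\E_4\wedge\dd\E_6$: its $\partial_{\E_2}$-component equals $\partial_{\E_4}\accol{\E_2,\E_4}_\lambda+\partial_{\E_6}\accol{\E_2,\E_6}_\lambda = \tfrac{7\lambda}{6}\E_4$, which is nonzero for any $\lambda\in\CC^*$. For the isomorphism claim I exhibit the explicit automorphism $\psi_\mu$ of $\fQM$ defined by $\psi_\mu(\E_2)=\mu\E_2$, $\psi_\mu(\E_4)=\E_4$, $\psi_\mu(\E_6)=\E_6$: it restricts to the identity on $\fMod$, hence is modular in the required sense, and a direct check on the three pairs of generators shows that, with $\mu=\lambda/\lambda'$, the map $\psi_\mu$ intertwines $\accol{\cdot,\cdot}_\lambda$ with $\accol{\cdot,\cdot}_{\lambda'}$.

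The Poisson-centre statement is the substantive step. Since the bracket shifts the weight grading by $2$, any Casimir decomposes into weight-homogeneous Casimirs; hence I may assume $f$ is homogeneous of fixed weight, and expand $f = \sum_{n=0}^{N} g_n(\E_4,\E_6)\E_2^n$ with $g_N\neq 0$. Extracting the coefficient of $\E_2^N$ in the relation $\accol{f,\E_4}_\lambda = 0$ kills the $\lambda$-contribution (which is proportional to $g_{N+1}=0$) and yields the identity $3\Delta\,\partial_{\E_6}g_N = N\E_6\,g_N$ inside $\CC[\E_4,\E_6]$. Writing further $g_N = \sum_{k=0}^{K} a_k(\E_4)\E_6^k$ with $a_K\neq 0$ and reading the coefficient of $\E_6^{K+1}$ in this identity gives $(N+3K)\,a_K = 0$, which forces $N = 0$. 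Then $f\in\CC[\E_4,\E_6]$, and the remaining two conditions $2\Delta\,\partial_{\E_6}f = 0$ and $-2\Delta\,\partial_{\E_4}f = 0$ force $f$ to be a constant. The main obstacle is precisely this filtration step: both the $\E_2$-degree and the $\E_6$-degree in $g_N$ must be exploited, since the bracket is homogeneous only for the combined weight grading and not for either partial degree alone.
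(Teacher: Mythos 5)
Your construction of the bracket, the admissibility check, and the modular isomorphism $\psi_{\lambda/\lambda'}$ all match the paper's treatment (the paper phrases the Jacobi identity as the condition $(p,q,r)\cdot\curl(p,q,r)=0$ on the Poissonian triple and normalises to $\lambda'=1$, but these are the same verifications). Your computation of the Poisson centre is correct and takes a genuinely different route: the paper realises $\left(\fQM,\accol{\phantom{f},\phantom{g}}_1\right)$ as a Poisson--Ore extension $\CC[\E_4,\E_6][\E_2]_{\sigma,\delta}$, first proves that the Poisson centraliser of $\E_2$ is $\CC[\E_2]$ by an induction on the coefficients of $f=\sum f_i\E_2^i$, and then intersects with the centraliser of $\E_4$; your double leading-coefficient extraction (first in $\E_2$, then in $\E_6$) arrives at $(N+3K)a_K=0$ directly, works for arbitrary $\lambda$ without passing through the isomorphism, and does not in fact even need the preliminary reduction to weight-homogeneous Casimirs. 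Both arguments are sound.

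The genuine gap is in point i). In this paper \emph{unimodular} does not mean that the modular vector field with respect to the volume form $\dd\E_2\wedge\dd\E_4\wedge\dd\E_6$ vanishes; it means that there exists $k\in\CC[x,y,z]$ with $\curl(p,q,r)=(p,q,r)\wedge\grad k$, i.e.\ that the modular vector field is \emph{Hamiltonian} (the modular class vanishes -- this is the volume-form-independent notion, and it is the one under which the Jacobian bracket of Proposition C is declared unimodular). Your computation correctly identifies the $\partial_{\E_2}$-component of the modular vector field as $\tfrac{7\lambda}{6}\E_4$, which is the first component of $\curl(p,q,r)$, but concluding non-unimodularity from its being nonzero is not valid: a nonzero modular vector field can still be Hamiltonian. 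What must be ruled out is the existence of a polynomial $k$ solving
\[
\frac{7\lambda}{6}\E_4=\frac{1}{2}\left(2\E_4^2\E_2-\lambda\E_4\E_6\right)\frac{\partial k}{\partial\E_6}-\frac{1}{3}\left(\lambda\E_4^2-2\E_2\E_6\right)\frac{\partial k}{\partial\E_4},
\]
which is the first component of $\curl(p,q,r)=(p,q,r)\wedge\grad k$. The missing step is short: every monomial of $p$, $q$ and $r$ has total degree at least $2$, so the right-hand side has no component of total degree $1$ for any polynomial $k$, whereas the left-hand side is a nonzero polynomial of degree $1$. With this degree count added, your point i) closes and coincides with the paper's argument.
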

\begin{remarkintro}
A Poisson isomorphism \(\varphi\) on \(\fQM\) is modular if \(\varphi(\fMod)\subset\fMod\). %
\end{remarkintro}
Thanks to~\ref{prop_Apoinii}) in Proposition~\ref{prop_A}, we restrict to the bracket \(\accol{\phantom{f},\phantom{g}}_1\). Following~\cite[Eq. (38)]{MR1280058}, we consider the derivation \(\dZ\) on \(\fQM\) defined by %
\[%
\dZ(f)=\frac{\accol{\Delta,f}_1}{12\Delta}. %
\]
A derivation \(\delta\) on \(\fQM\) is complex-like if \(\delta\left(\Mod_k^{\leq s}\right)\subset\Mod_{k+2}^{\leq s+1}\) for any \(k\) and \(s\). 
The set of complex-like derivations \(\delta\) such that \(kf\delta(g)-\ell g\delta(f)=0\) for any \(f\in\Mod_k^{\leq s}\) and \(g\in\Mod_\ell^{\leq t}\), for any \(k,\ell,s,t\), is a one dimensional vector space over \(\CC\). Let \(\pi\) be a generator.  The following Theorem provides a first family of formal deformations of the algebra \(\fQM\). %
\begin{theoremintro}\label{thm_A}
For any \(a\in\CC\), let \(\der_a\) be the derivation on \(\fQM\) defined by \(\der_a=a\pi+\dZ\). %
\begin{enumerate}[i)]
\item\label{item_thmAi} For all quasimodular forms \(f\) and \(g\) of respective weights \(k\) and \(\ell\), we have %
\[%
\accol{f,g}_1=kf\der_a(f)-\ell g\der_a(f). %
\]
\item\label{item_thmAii} More generally, for any \(a\in\CC\), the brackets defined for any integer \(n\geq 0\) by %
\[%
\crochet{f,g}_{\der_a,n}=\sum_{r=0}^n(-1)^r\binom{k+n-1}{n-r}\binom{\ell+n-1}{r}\der_a^r(f)\der_a^{n-r}(g),\quad (f\in\Mod_k^{\leq\infty}, g\in\Mod_\ell^{\leq\infty})
\]
satisfy %
\[%
\crochet*{\Mod_k^{\leq\infty},\Mod_\ell^{\leq\infty}}_{\der_a,n}\subset\Mod_{k+\ell+2n}^{\leq\infty}
\]
and define a formal deformation of \(\fQM\). %
\item\label{item_thmAiii} Moreover, \(\crochet*{\Mod_k^{\leq s},\Mod_\ell^{\leq t}}_{\der_a,n}\subset\Mod_{k+\ell+2n}^{\leq s+t}\) for all \(n,s,t,k,\ell\) if and only if \(a=0\). % 
\end{enumerate}
\end{theoremintro}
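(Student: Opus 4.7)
\emph{Part \ref{item_thmAi}.} The plan is to use the defining properties of \(\pi\) and \(\dZ\) to reduce the identity to the known formula on modular forms. Since \(\pi\) is defined so that \(kf\pi(g)-\ell g\pi(f)=0\) on \(\fMod\times\fMod\), the \(a\pi\)-contribution drops out of the right-hand side, reducing the claim to \(\accol{f,g}_1=kf\dZ(g)-\ell g\dZ(f)\) for \(f,g\in\fMod\). On modular forms the bracket equals \(\RC_1(f,g)=kf\Diff g-\ell g\Diff f\), and I would compute \(\dZ(f)=\Diff f-(k/12)\E_2 f\) for \(f\in\fMod\) of weight \(k\) — Serre's derivative — by combining \(\Diff\Delta=\E_2\Delta\) with the admissibility formula \(\accol{\Delta,f}_1=\RC_1(\Delta,f)\). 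Substituting into \(kf\dZ(g)-\ell g\dZ(f)\), the two \(\E_2 fg\) terms cancel and we recover \(\RC_1(f,g)\).

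\emph{Part \ref{item_thmAii}.} The weight inclusion is immediate: since \(\der_a\) is complex-like, each iterate raises weight by \(2\), so every summand of \(\crochet{f,g}_{\der_a,n}\) lies in \(\Mod_{k+\ell+2n}^{\leq\infty}\). For the formal-deformation assertion, the plan is to appeal to the abstract framework of Rankin--Cohen algebras in the sense of Zagier and Connes--Moscovici: the data \((\fQM,\der_a,\text{weight})\) satisfies the standard axioms because \(\der_a\) is a derivation of the commutative algebra compatible with the grading, so the universal construction yields an associative star product whose \(n\)-th coefficient is proportional to \(\crochet{f,g}_{\der_a,n}\). Carefully verifying the required compatibilities in the quasimodular setting is expected to be the main work here.

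\emph{Part \ref{item_thmAiii}.} For the \emph{if} direction I would prove the sharper statement that \(\dZ\) preserves the depth filtration. Decomposing \(f\in\Mod_k^{\leq s}\) as \(\sum_{j=0}^s \E_2^j g_j\) with \(g_j\in\fMod\), the Leibniz rule together with \(\dZ(\E_2)=-\E_4/12\) (a direct consequence of the explicit formulas of Proposition \ref{prop_A}) shows that each summand of \(\dZ(f)\) has \(\E_2\)-degree at most \(s\). Iterating yields \(\dZ^r(f)\in\Mod_{k+2r}^{\leq s}\), so \(\crochet{f,g}_{\dZ,n}\subset\Mod_{k+\ell+2n}^{\leq s+t}\). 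For the \emph{only if} direction, the plan is first to solve \(kf\pi(g)=\ell g\pi(f)\) on the generators \(\E_2,\E_4,\E_6\), which forces \(\pi(\E_i)=(i/2)\E_2\E_i\) up to a scalar (so \(\pi\) strictly raises depth), and then to compute \(\crochet{\E_4,\E_4}_{\der_a,2}\) explicitly using \(\der_a(\E_4)=-\E_6/3+2a\E_2\E_4\); the result contains a non-vanishing \(a^2\E_2^2\E_4^2\) contribution whose depth cannot be absorbed by the remaining terms, forcing \(a=0\). The main obstacle is organizing the depth bookkeeping, which is manageable since it reduces to a finite calculation on generators.
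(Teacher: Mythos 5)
The gap is in part \ref{item_thmAi}). The identity is asserted for \emph{all} quasimodular \(f,g\), and after the \(a\pi\)-contribution drops out (which it does on all of \(\fQM\), not merely on \(\fMod\times\fMod\), by the defining property of \(\pi\)), the remaining claim is \(\accol{f,g}_1=kf\dZ(g)-\ell g\dZ(f)\) for all quasimodular \(f,g\); it does \emph{not} reduce to the case \(f,g\in\fMod\). Your verification treats only the modular case, where both sides equal \(\RC_1(f,g)\) and nothing is said about the values \(\accol{\E_2,\E_4}_1\) and \(\accol{\E_2,\E_6}_1\) fixed in Proposition~\ref{prop_A} --- which are precisely what the identity has to match. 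The legitimate reduction is that both sides are biderivations of \(\fQM\) (the right-hand side is even a Poisson bracket by Corollary~\ref{cor_trois}, since the weight is graded-additive and \(\dZ\) is homogeneous of degree \(2\)), so equality need only be checked on the three generator pairs: \((\E_4,\E_6)\) is your modular computation, and the pairs \((\E_2,\E_4)\) and \((\E_2,\E_6)\) require \(\dZ(\E_2)=-\tfrac{1}{12}\E_4\) (which you only invoke later, in part \ref{item_thmAiii})) together with \(\dZ(\E_4)=-\tfrac13\E_6\) and \(\dZ(\E_6)=-\tfrac12\E_4^2\); for instance
\[
2\E_2\dZ(\E_4)-4\E_4\dZ(\E_2)=-\tfrac23\E_6\E_2+\tfrac13\E_4^2=\accol{\E_2,\E_4}_1 .
\]
This is exactly the ``direct computation on generators'' that the paper performs, and it is the only substantive content of \ref{item_thmAi}); once you add it your argument for that part is complete.

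The remaining parts are sound and essentially coincide with the paper's proof: \ref{item_thmAii}) is the same appeal to Zagier's Rankin--Cohen algebra theorem for a graded commutative algebra with a homogeneous derivation of degree \(2\) (the ``compatibilities to verify'' amount to nothing more than that), and in \ref{item_thmAiii}) your depth bookkeeping for \(\der_0\) via \(\der_0(f_i\E_2^i)=\der_0(f_i)\E_2^i-\tfrac{1}{12}if_i\E_4\E_2^{i-1}\) is the paper's argument. Your obstruction witness \(\crochet{\E_4,\E_4}_{\der_a,2}=20\E_4\der_a^2(\E_4)-25\bigl(\der_a(\E_4)\bigr)^2\), which indeed carries a nonzero multiple of \(a^2\E_4^2\E_2^2\) (depth \(2\), whereas \(s+t=0\)), differs from the paper's choice \(\crochet{\E_2,\E_4}_{\der_a,2}\) with top term \(8a^2\E_4\E_2^3\), but is equally valid.
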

\begin{remarkintro}
A generator is \(\pi\) defined by linear extension of %
\(%
\pi(f)=kf\E_2 %
\)
for \(f\) any quasimodular form of weight \(k\). For this choice, the derivation \(\der_a\) is defined on the generators by %
\[
\der_a\E_2=2a\E_2^2-\frac{1}{12}\E_4,\qquad \der_a\E_4=4a\E_4\E_2-\frac{1}{3}\E_6,\qquad \der_a\E_6=6a\E_6\E_2-\frac{1}{2}\E_4^2. %
\]
\end{remarkintro}

We construct a second family of Poisson brackets.
\begin{propintro}[Second family of Poisson brackets]\label{prop_B}
For any \(\alpha\in\CC\), there exists an admissible Poisson bracket \(\parenth{\phantom{f},\phantom{g}}_\alpha\) on the algebra of quasimodular forms defined by the following values on the generators: %
\[%
\parenth{\E_4,\E_6}_\alpha=-2\Delta,\qquad \parenth{\E_2,\E_4}_\alpha=\alpha\E_6\E_2,\qquad \parenth{\E_2,\E_6}_\alpha=\frac{3}{2}\alpha\E_4^2\E_2. %
\]
Moreover, %
\begin{enumerate}[i)]
\item for any \(\alpha\in\CC\setminus\{4\}\), the Poisson bracket \(\parenth{\phantom{f},\phantom{g}}_\alpha\) is not unimodular. For \(\alpha=4\), the Poisson bracket  \(\parenth{\phantom{f},\phantom{g}}_4\) is Jacobian (hence unimodular) of potential \(k_0=-2\Delta\E_2\). %
\item The Poisson algebras \(\left(\fQM,\parenth{\phantom{f},\phantom{g}}_\alpha\right)\) and \(\left(\fQM,\parenth{\phantom{f},\phantom{g}}_{\alpha'}\right)\) are Poisson modular iso\-mor\-phic if and only if \(\alpha=\alpha'\). %
\item For any \(\alpha\in\CC\), %
\begin{enumerate}[\indent a)]
\item if \(\alpha\notin\Q\), the Poisson center of  \(\left(\fQM,\parenth{\phantom{f},\phantom{g}}_\alpha\right)\) is \(\CC\) %
\item if \(\alpha=0\), the Poisson center of  \(\left(\fQM,\parenth{\phantom{f},\phantom{g}}_\alpha\right)\) is \(\CC[\E_2]\) %
\item if \(\alpha=p/q\) with \(p\in\Z^*\) and \(q\in\N^*\), \(p\) and \(q\) coprimes, the Poisson center of  \(\left(\fQM,\parenth{\phantom{f},\phantom{g}}_\alpha\right)\) is%
\[%
\begin{cases}
\CC & \text{if \(p<0\)}\\
\raisebox{3ex}{\makebox[0cm]{}}
\CC\left[\Delta^p\E_2^{4q}\right] & \text{if \(p\geq 1\) is odd}\\
\raisebox{3ex}{\makebox[0cm]{}}
\CC\left[\Delta^u\E_2^{2q}\right] & \text{if \(p=2u\) for odd \(u\geq 1\)}\\
\raisebox{3ex}{\makebox[0cm]{}}
\CC\left[\Delta^v\E_2^{q}\right] & \text{if \(p=4v\) with \(v\geq 1\).}
\end{cases}
\]
\end{enumerate}
\end{enumerate}
\end{propintro}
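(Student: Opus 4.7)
The plan is to proceed as in Proposition~\ref{prop_A}. Since \(\fQM=\CC[\E_2,\E_4,\E_6]\) is a polynomial algebra, the bracket \(\parenth{\phantom{f},\phantom{g}}_\alpha\) is defined on \(\fQM\) by extending the prescribed values on generators by the Leibniz rule; the Jacobi identity then reduces to a single relation on the triple \((\E_2,\E_4,\E_6)\), which is a direct homogeneous computation yielding the cancellation \(\tfrac{3\alpha^{2}}{2}\E_{4}^{2}\E_{6}\E_{2}-\tfrac{3\alpha^{2}}{2}\E_{4}^{2}\E_{6}\E_{2}=0\). Admissibility is then immediate: each bracket value is weight-homogeneous of weight equal to the sum of the input weights plus~\(2\), and has depth equal to the sum of the input depths, using that \(\E_2\) has depth one while \(\E_4\) and \(\E_6\) have depth zero.

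For~(i), a direct comparison shows that when \(\alpha=4\) the three bracket values agree with the partial derivatives \(\partial_{\E_6}k_0\), \(\partial_{\E_2}k_0\) and \(-\partial_{\E_4}k_0\) of \(k_0=-2\Delta\E_2\), so \(\parenth{\phantom{f},\phantom{g}}_4\) is the Jacobian Poisson bracket with potential \(k_0\), hence unimodular. For \(\alpha\neq 4\) I would compute the modular (curl) vector field of the Poisson bivector with respect to the volume form \(\dd\E_2\wedge\dd\E_4\wedge\dd\E_6\) and exhibit an explicit non-vanishing component, which quantifies the obstruction to being Jacobian.

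For~(ii), a Poisson modular isomorphism \(\varphi\) of \(\fQM\) sends \(\fMod\) into \(\fMod\) and preserves weights; since the weight-two part of \(\fMod\) is zero, this forces \(\varphi(\E_2)=c'\E_2\), \(\varphi(\E_4)=c^{4}\E_4\) and \(\varphi(\E_6)=c^{6}\E_6\) for some \(c,c'\in\CC^{*}\). Matching \(\parenth{\E_4,\E_6}_\alpha=-2\Delta\) under \(\varphi\) forces \(c^{2}=1\), and then matching \(\parenth{\E_2,\E_4}_\alpha=\alpha\E_6\E_2\) gives \(\alpha c^{2}=\alpha'\), i.e.\ \(\alpha=\alpha'\); the converse is trivial.

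The main difficulty lies in~(iii). The strategy exploits that the bracket is weight-homogeneous of degree~\(+2\), so the Poisson center is weight-graded and one may assume \(f\) weight-homogeneous. Writing \(f=\sum_{j\geq 0}f_j\E_2^{j}\) with \(f_j\in\CC[\E_4,\E_6]\), I first verify \(\parenth{\Delta,\E_2}_\alpha=0\), \(\parenth{\Delta,\E_4}_\alpha=-4\Delta\E_6\) and \(\parenth{\Delta,\E_6}_\alpha=-6\Delta\E_{4}^{2}\). The condition \(\parenth{f,\E_2}_\alpha=0\) then reads \(\mathcal D(f_j)=0\) for every~\(j\), where \(\mathcal D\) is the weight-two derivation of \(\CC[\E_4,\E_6]\) determined by \(\mathcal D(\E_4)=\E_6\), \(\mathcal D(\E_6)=\tfrac{3}{2}\E_{4}^{2}\); since \(\Delta\) is a first integral of the associated polynomial vector field, \(\ker\mathcal D=\CC[\Delta]\), so \(f_j=c_j\Delta^{m_j}\) with \(12m_j+2j=w\). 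The remaining equation \(\parenth{f,\E_4}_\alpha=0\) collapses to \(c_j(\alpha j-4m_j)=0\) for each \(j\) (and symmetrically for \(\parenth{f,\E_6}_\alpha\)), so only monomials \(\Delta^{m}\E_2^{n}\) with \(n\alpha=4m\) survive. The four listed cases then follow from an elementary Bézout analysis of \(np=4mq\) with \(\gcd(p,q)=1\), discriminated by the sign of \(p\) and by the \(2\)-adic valuation of \(p\); in each case the minimal nonnegative solution \((n,m)\) produces the single generator of the Poisson center given in the statement.
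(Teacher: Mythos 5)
Your construction of the bracket, the admissibility check, and your part~(iii) essentially reproduce the paper's argument (the centraliser of \(\E_2\) is \(\CC[\Delta,\E_2]\) because your \(\mathcal D\) is \(-3\vartheta\) with \(\vartheta\) the Serre derivative, whose kernel is \(\CC[\Delta]\), and the congruence \(np=4mq\) then gives the four cases); note only that your first reduction divides by \(\alpha\), so the case \(\alpha=0\) needs the separate, easy argument that the centre is then \(\CC[\E_2]\). The genuine gaps are in parts~(i) and~(ii).

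In part~(i), for \(\alpha\neq4\) you propose to exhibit a non-vanishing component of \(\curl(p,q,r)\), ``which quantifies the obstruction to being Jacobian.'' That only shows the triple is not exact, i.e.\ the bracket is not \emph{Jacobian}; the statement asserts it is not \emph{unimodular}, which is strictly harder to disprove: unimodularity means \(\curl(p,q,r)=(p,q,r)\wedge\grad k\) for some polynomial \(k\), and a nonzero curl can perfectly well have this Hamiltonian form. The paper closes this by writing out one component of that equation, namely
\[
(4-\alpha)z=\alpha xz\frac{\partial k}{\partial x}+2(y^3-z^2)\frac{\partial k}{\partial z},
\]
and checking by weight homogeneity that it has no polynomial solution when \(\alpha\neq4\). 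Your argument as written proves a weaker statement.

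In part~(ii), you assert that a Poisson modular isomorphism ``preserves weights'' and is therefore diagonal on the generators, \(\varphi(\E_4)=c^4\E_4\), \(\varphi(\E_6)=c^6\E_6\), \(\varphi(\E_2)=c'\E_2\). The definition only requires \(\varphi(\fMod)\subset\fMod\); weight preservation is not given, and it is precisely the nontrivial rigidity one must prove. The paper does this in a separate proposition: every Poisson automorphism of \((\fMod,\RC_1)\) is the identity, obtained by analysing \(s^3-t^2=\lambda(x^3-y^2)\). Even granting that, surjectivity only yields \(\varphi(\E_2)=\eta\E_2+F\) with \(\eta\in\CC^*\) and \(F\in\fMod\), not \(F=0\); your comparison of the coefficients of \(\E_2\) in \(\varphi\left(\parenth{\E_2,\E_4}_\alpha\right)\) and \(\parenth{\varphi(\E_2),\E_4}_{\alpha'}\) still gives \(\alpha=\alpha'\) because \(\parenth{F,\E_4}_{\alpha'}\in\fMod\), but the diagonal normal form you start from is unjustified as stated.
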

\begin{remarkintro}
The bracket  \(\parenth{\phantom{f},\phantom{g}}_0\) is the trivial bracket. %
\end{remarkintro}
This second family provides a new set of formal deformations of the algebra of quasimodular forms. Following~\cite[Eq. (38)]{MR1280058}, we consider the derivation \(\dV\) defined on \(\fQM\) by %
\[%
\dV(f)=\frac{\parenth{\Delta,f}_\alpha}{12\Delta}. %
\]
Let us define \(\mathcal{K}_\alpha\colon\fQM\to\CC\) by setting \(\mathcal{K}_\alpha(f)=k-(3\alpha+2)s\) if \(f\) has weight \(k\) and depth \(s\).  The set of complex-like derivations \(\delta\) such that \(\mathcal{K}_\alpha(f)f\delta(g)-\mathcal{K}_\alpha(g)g\delta(f)=0\) for any \(f\in\Mod_k^{\leq s}\) and \(g\in\Mod_\ell^{\leq t}\), for any \(k,\ell,s,t\), is a one dimensional vector space over \(\CC\). Let \(\pi_\alpha\) be a generator. 
We note %
\[%
\Mod_k^s=\Mod_k\E_2^s. %
\]
\begin{theoremintro}\label{thm_B}
Let \(\alpha\in\CC\). For any \(b\in\CC\), let \(\delta_{\alpha,b}\) be the derivation on \(\fQM\) defined by \(\delta_{\alpha,b}=b\pi_{\alpha}+\dV\). %
\begin{enumerate}[i)]
\item\label{item_thmBi} For all \(f\in\Mod_k^{s}\) and \(g\in\Mod_\ell^{t}\), we have %
\[%
\parenth{f,g}_\alpha=\left(k-(3\alpha+2)s\right)f\delta_{\alpha,b}(g)-\left(\ell-(3\alpha+2)t\right)g\delta_{\alpha,b}(f) %
\]
for any \(f\in\Mod_k^s\) and \(g\in\Mod_\ell^t\). %
\item\label{item_thmBii} Moreover, the brackets defined for any integer \(n\geq 0\) by %
\[%
\crochet{f,g}_{\delta_{\alpha,b},n}^{\mathcal{K}_\alpha}=\sum_{r=0}^n(-1)^r\binom{k-(3\alpha+2)s+n-1}{n-r}\binom{\ell-(3\alpha+2)t+n-1}{r}\delta_{\alpha,b}^r(f)\delta_{\alpha,b}^{n-r}(g),
\]
for any \(f\in\Mod_k^{s}\) and \(g\in\Mod_\ell^{t}\) define a formal deformation of \(\fQM\) satisfying %
\[%
\crochet*{\Mod_k^{\leq s},\Mod_\ell^{\leq t}}_{\delta_{\alpha,b},n}^{\mathcal{K}_\alpha}\subset\Mod_{k+\ell+2n}^{\leq s+t}
\]
for all \(k,\ell\) in \(2\N\) and \(s,t\) in \(\N\) if and only if \(b=0\). 
\end{enumerate}
\end{theoremintro}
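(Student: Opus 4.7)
The plan is to split the argument as follows: part~\ref{item_thmBi} by bilinearity and a check on the three generating pairs; part~\ref{item_thmBii} by adapting the Cohen–Zagier formal-deformation machinery to the weight-function $\mathcal{K}_\alpha$ and the derivation $\delta_{\alpha,b}$, and then treating the depth condition as a separate graded analysis.

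For part~\ref{item_thmBi}, I first observe that the right-hand side is independent of $b$: the contribution of $b\pi_\alpha$ equals $b\bigl[\mathcal{K}_\alpha(f)f\pi_\alpha(g)-\mathcal{K}_\alpha(g)g\pi_\alpha(f)\bigr]$, which vanishes by the very defining property of $\pi_\alpha$. We may therefore reduce to $\delta_{\alpha,0}=\dV$. Both sides of the claimed identity are biadditive and satisfy, in each variable, the Leibniz rule: on the left this is the Poisson property, and on the right it follows from the derivation property of $\dV$ combined with the additivity of $\mathcal{K}_\alpha$ under products (weight and depth are both additive). It therefore suffices to verify the formula on the three pairs of algebra generators $(\E_4,\E_6)$, $(\E_2,\E_4)$ and $(\E_2,\E_6)$. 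The values of $\parenth{\cdot,\cdot}_\alpha$ are recorded in Proposition~\ref{prop_B}, and the values $\dV(\E_2),\dV(\E_4),\dV(\E_6)$ are obtained from $\dV(f)=\parenth{\Delta,f}_\alpha/(12\Delta)$; the identity then becomes a direct verification.

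For the first half of part~\ref{item_thmBii}, the brackets $\crochet{\cdot,\cdot}^{\mathcal{K}_\alpha}_{\delta_{\alpha,b},n}$ are formally of Rankin–Cohen type, with the modular weight replaced by $\mathcal{K}_\alpha$ and Cohen's derivation replaced by $\delta_{\alpha,b}$. Associativity of the star product $f\star g=\sum_{n\geq 0}\hbar^n\crochet{f,g}^{\mathcal{K}_\alpha}_{\delta_{\alpha,b},n}$ is encoded in the Cohen–Zagier binomial identities \cite{MR1280058}; inspection of that proof shows that the only algebraic facts used are the additivity of the weight under products and the Leibniz rule for the derivation, both of which hold in our setting for every $b\in\CC$. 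Part~\ref{item_thmBi} itself guarantees that the order-one bracket recovers the Poisson structure $\parenth{\cdot,\cdot}_\alpha$, which concludes the formal-deformation assertion. For the depth condition, the direction $b=0$ is straightforward: admissibility of $\parenth{\cdot,\cdot}_\alpha$ yields $\parenth{\Delta,\Mod_k^{\leq s}}_\alpha\subset\Mod_{k+14}^{\leq s}$, and dividing by $\Delta$ gives $\dV(\Mod_k^{\leq s})\subset\Mod_{k+2}^{\leq s}$; iterating this shows $\delta_{\alpha,0}^r(f)\delta_{\alpha,0}^{n-r}(g)\in\Mod_{k+\ell+2n}^{\leq s+t}$. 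Conversely, when $b\neq 0$ the summand $b\pi_\alpha$ strictly raises depth by one, so $\delta_{\alpha,b}^r(f)\delta_{\alpha,b}^{n-r}(g)$ lies \emph{a priori} only in $\Mod_{k+\ell+2n}^{\leq s+t+n}$; I would extract the pure-maximal-depth component of $\crochet{f,g}^{\mathcal{K}_\alpha}_{\delta_{\alpha,b},n}$ and exhibit a small-$n$ pair (for instance $n=2$ with $f=g=\E_4$) for which this top-depth component is nonzero.

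The principal obstacle is the associativity step in part~\ref{item_thmBii}: although the structure mirrors Cohen–Zagier verbatim, one must track the combinatorial identities carefully to confirm that they invoke only the abstract additivity of $\mathcal{K}_\alpha$ under products and the derivation property of $\delta_{\alpha,b}$, and no ad hoc feature of the classical modular weight. The second, more computational, obstacle is producing an explicit witness of failure of the depth bound when $b\neq 0$, since the leading-depth part is a sum of Rankin–Cohen-type terms and its non-cancellation must be checked by hand.
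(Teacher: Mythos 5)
Your part~\ref{item_thmBi}) is correct and is essentially the paper's argument (reduce to the generators; the $b\pi_\alpha$ contribution cancels identically). The serious problem is in part~\ref{item_thmBii}), where you assert that associativity holds for every $b\in\CC$ because the Cohen--Zagier identities ``invoke only the additivity of $\mathcal{K}_\alpha$ under products and the Leibniz rule''. That is not the full list of hypotheses. The Zagier/Connes--Moscovici machinery also requires the derivation to be homogeneous of degree $2$ for the grading attached to the weight function, i.e.\ $\varpi\circ\der-\der\circ\varpi=2\der$ where $\varpi(f)=\mathcal{K}_\alpha(f)f$ (Connes--Moscovici's relation $HE-EH=E$ with $H=\varpi/2$), equivalently $\mathcal{K}_\alpha(\der f)=\mathcal{K}_\alpha(f)+2$. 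The derivation $\dV$ satisfies this, but $\pi_\alpha$ sends $\Mod_k^s$ into $\Mod_{k+2}^{s+1}$ and therefore shifts $\mathcal{K}_\alpha$ by $2-(3\alpha+2)=-3\alpha$, not by $2$; so for $b\neq 0$ (and $\alpha\neq -2/3$) the bracket $\crochet{f,g}_{\delta_{\alpha,b},r}^{\mathcal{K}_\alpha}$ is not even $\mathcal{K}_\alpha$-homogeneous of degree $\mathcal{K}_\alpha(f)+\mathcal{K}_\alpha(g)+2r$, and the associativity condition~\eqref{associativity} genuinely breaks. The paper records a concrete failure: for $\alpha=-1/3$ and $b\neq 0$, associativity fails already at $n=3$ (checked by computer). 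So ``formal deformation for all $b$'' is false, not merely unproved.

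This gap propagates to your ``only if'' direction, which relies exclusively on the depth condition. The top-depth component of the $n=2$ brackets is proportional to $b^2(1+3\alpha)$ (this is what one finds for the coefficient of $\E_4^2\E_2^2$ in $\crochet{\E_4,\E_4}_{\delta_{\alpha,b},2}^{\mathcal{K}_\alpha}$, and the pair $(\E_4,\E_6)$ used in the paper behaves the same way). Hence your proposed witness rules out $b\neq 0$ only when $\alpha\neq -1/3$; at $\alpha=-1/3$ the depth obstruction vanishes and the only way to exclude $b\neq 0$ is to show that the family fails to be a formal deformation --- precisely the associativity failure your argument assumes cannot occur. The paper's proof is organised around this dichotomy: the depth condition at $n=2$ forces $b=0$ or $\alpha=-1/3$, the residual case is eliminated by the explicit $n=3$ associativity check, and for $b=0$ the deformation is obtained from Connes--Moscovici's theorem applied to $E=\delta_{\alpha,0}$ and $H=\varpi/2$ (your argument for the depth bound when $b=0$, via $\dV=\parenth{\Delta,\cdot}_\alpha/(12\Delta)$ preserving the $\E_2$-degree, is fine).
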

\begin{remarkintro}
A generator \(\pi_\alpha\) is defined by linear extension of: %
\[%
\pi_\alpha(f)=[k-(3\alpha+2)s]f\E_2\qquad (f\in\Mod_k^s). %
\]
For this choice, the derivation \(\delta_{\alpha,b}\) is defined on the generators by %
\[
\delta_{\alpha,b}\E_2=-3b\alpha\E_2^2,\qquad \delta_{\alpha,b}\E_4=4b\E_4\E_2-\frac{1}{3}\E_6,\qquad \delta_{\alpha,b}\E_6=6b\E_6\E_2-\frac{1}{2}\E_4^2. %
\]
\end{remarkintro}
To complete the classification of Poisson structures, we introduce a third family of Poisson brackets. We note however that, when \(\mu\neq 0\),  this third family does not lead to a formal deformation of \(\fQM\) with the shape of Rankin-Cohen brackets (see~\S\ref{sec_extfamtrois}).%
\begin{propintro}[Third family of Poisson brackets]\label{prop_C}%
For any \(\mu\in\CC\), there exists an admissible Poisson brackets \(\scal{\phantom{f},\phantom{f}}_\mu\) on the algebra of quasimodular forms defined by the following values on the generators: %
\[%
\scal{\E_4,\E_6}_\mu=-2\Delta,\quad \scal{\E_2,\E_4}_\mu=4\E_6\E_2+\mu\E_4^2,\quad \scal{\E_2,\E_6}_\mu=6\E_4^2\E_2-2\mu\E_4\E_6. %
\]
Moreover, %
\begin{enumerate}[i)]
\item  this Poisson bracket is Jacobian with potential %
\[%
k_\mu=-2\Delta\E_2+\mu\E_4^2\E_6.
\]%
\item The Poisson algebras \(\left(\fQM,\scal{\phantom{f},\phantom{g}}_\mu\right)\) and \(\left(\fQM,\scal{\phantom{f},\phantom{g}}_{\mu'}\right)\) are Poisson modular iso\-mor\-phic for all \(\mu\) and \(\mu'\) in \(\CC^*\). %
\item For any \(\mu\in\CC\), the Poisson center of \(\left(\fQM,\scal{\phantom{f},\phantom{g}}_\mu\right)\) is the polynomial algebra \(\CC[k_\mu]\). %
\end{enumerate}
\end{propintro}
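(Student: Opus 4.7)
The plan is to leverage the Jacobian nature of the bracket throughout. First, I would verify directly that the three values $\scal{\E_4,\E_6}_\mu=-2\Delta$, $\scal{\E_2,\E_4}_\mu=4\E_6\E_2+\mu\E_4^2$ and $\scal{\E_2,\E_6}_\mu=6\E_4^2\E_2-2\mu\E_4\E_6$ are exactly $\partial k_\mu/\partial\E_2$, $\partial k_\mu/\partial\E_6$ and $-\partial k_\mu/\partial\E_4$ respectively, where $k_\mu=-2\Delta\E_2+\mu\E_4^2\E_6\in\fQM=\CC[\E_2,\E_4,\E_6]$. This identifies $\scal{\phantom{f},\phantom{g}}_\mu$ with the Jacobian Poisson bracket associated to $k_\mu$, and since any Jacobian bracket in three variables automatically satisfies Jacobi and admits its potential as a Casimir, this one computation simultaneously establishes the existence of the bracket, item (i), and the inclusion of $\CC[k_\mu]$ in the Poisson centre. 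Admissibility reduces to two elementary verifications: the Rankin-Cohen identity $\scal{\E_4,\E_6}_\mu=-2\Delta=\RC_1(\E_4,\E_6)$, an immediate consequence of the Ramanujan formulas for $\Diff\E_4$ and $\Diff\E_6$; and inspection of the weight and depth of each of the three bracket values on the generators, the general filtration estimate following by the Leibniz rule.

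For assertion (ii) it suffices to exhibit one Poisson modular isomorphism for every pair $\mu,\mu'\in\CC^*$. I would take the algebra automorphism $\varphi$ of $\fQM$ that fixes $\E_4$ and $\E_6$ and sends $\E_2$ to $(\mu/\mu')\E_2$; this is trivially modular, well-defined because $\fQM$ is a free polynomial algebra, and a direct check on the three generator brackets shows that it intertwines $\scal{\phantom{f},\phantom{g}}_\mu$ and $\scal{\phantom{f},\phantom{g}}_{\mu'}$.

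Assertion (iii) is the main difficulty. The inclusion $\CC[k_\mu]$ in the centre was already obtained. For the reverse inclusion I would invoke the computation of the zero-th Poisson cohomology of a Jacobian Poisson structure on $\CC[x,y,z]$, in the spirit of Pichereau's work: for a square-free potential, the Casimir algebra is generated by the potential itself. The required square-freeness (and indeed irreducibility for $\mu\neq 0$) of $k_\mu$ is obtained by viewing $k_\mu$ as a degree-one polynomial in $\E_2$ with coefficients $-2\Delta$ and $\mu\E_4^2\E_6$ in $\CC[\E_4,\E_6]$, and using that $\Delta=\E_4^3-\E_6^2$ is irreducible in $\CC[\E_4,\E_6]$ and coprime to $\E_4^2\E_6$. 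Should the black-box form of the Pichereau statement not cover the case $\mu=0$, where the critical locus of $k_0$ is one-dimensional rather than isolated, I would argue directly: writing a Casimir as $f=\sum_s f_s\E_2^s$ with $f_s\in\CC[\E_4,\E_6]$ and extracting coefficients in $\E_2$ from $\scal{f,\E_4}_0=\scal{f,\E_6}_0=0$ yields the relations $\Delta\,\partial f_s/\partial\E_6=-2s\E_6 f_s$ and $\Delta\,\partial f_s/\partial\E_4=3s\E_4^2 f_s$; an induction on $s$ using the coprimality of $\Delta$ with $\E_4$ and $\E_6$ then forces $f_s=c_s\Delta^s$, whence $f=\sum_s c_s(\Delta\E_2)^s\in\CC[k_0]$.
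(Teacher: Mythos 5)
Your treatment of the existence of the bracket, of item (i) and of item (ii) coincides with the paper's: one checks that the three generator brackets are $\partial k_\mu/\partial\E_2$, $\partial k_\mu/\partial\E_6$ and $-\partial k_\mu/\partial\E_4$, so the structure is Jacobian with Casimir $k_\mu$, admissibility is read off the generators, and the rescaling $\E_2\mapsto c\E_2$ fixing $\E_4,\E_6$ provides the Poisson modular isomorphisms. All of that is fine.

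The gap is in item (iii) for $\mu\neq 0$. The Pichereau-type statement you want to quote computes $\HP^0$ for a weight-homogeneous potential with an \emph{isolated} singularity, and the paper explicitly warns (remark at the end of the introduction) that this hypothesis fails here: setting $\E_4=\E_6=0$ in $\grad k_\mu$ shows that the whole line $\E_4=\E_6=0$ is critical for \emph{every} $\mu$, not only for $\mu=0$ as your hedge suggests. Your substitute hypothesis of square-freeness is not the correct one either: $x(x+1)$ is square-free, yet the Jacobian structure on $\CC[x,y,z]$ with this potential has Poisson centre $\CC[x]\supsetneq\CC[x^2+x]$. The condition that actually characterises $\HP^0=\CC[k]$ is \emph{indecomposability} of the potential (no writing $k=p\circ\ell$ with $\deg p\geq 2$), and the paper proves precisely this criterion as Lemma~\ref{lem_lemma0}, via the rank-two Jacobian argument and the Gutierrez--Sevilla theorem; it applies here because $k_\mu$ has degree $1$ in $\E_2$, which rules out any decomposition. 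Your fallback direct computation (expanding a Casimir in powers of $\E_2$ and inducting on the divisibility by $\Delta$) is correct as far as it goes, but you only carry it out for $\mu=0$, where the bracket relations are simplest; as written, the reverse inclusion of the centre into $\CC[k_\mu]$ is not established for $\mu\neq 0$.
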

\begin{remarkintro}
We note that \(\scal{\phantom{f},\phantom{g}}_0=\parenth{\phantom{f},\phantom{g}}_4\). %
\end{remarkintro}
Finally, the following result implies that our classification is complete. %
\begin{theoremintro}\label{thm_C}
Up to Poisson modular isomorphism, the only distinct admissible Poisson brackets on the algebra of quasimodular forms are \(\accol{\phantom{f},\phantom{g}}_1\), \(\scal{\phantom{f},\phantom{g}}_1\) and the family \(\parenth{\phantom{f},\phantom{g}}_\alpha\) for any \(\alpha\in\CC\). %
\end{theoremintro}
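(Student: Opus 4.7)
The plan is to classify all admissible Poisson brackets by parametrizing them on the generators of $\fQM = \CC[\E_2,\E_4,\E_6]$, then using the Jacobi identity on the triple $(\E_2,\E_4,\E_6)$ alone. Since the Eisenstein series are algebraically independent, any Poisson bracket on $\fQM$ is determined by its values on them. Axiom (i) of admissibility fixes $\{\E_4,\E_6\} = \RC_1(\E_4,\E_6) = -2\Delta$. Axiom (ii), combined with weight and depth counting, forces $\{\E_2,\E_4\}\in \Mod_8^{\leq 1} = \CC\E_4^2\oplus\CC\E_2\E_6$ and $\{\E_2,\E_6\}\in \Mod_{10}^{\leq 1} = \CC\E_4\E_6\oplus\CC\E_2\E_4^2$. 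Hence every admissible bracket is encoded by four complex scalars $A,B,C,D$ via
\[
\{\E_2,\E_4\} = A\E_4^2 + B\E_2\E_6,\qquad \{\E_2,\E_6\} = C\E_4\E_6 + D\E_2\E_4^2.
\]

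Next I would impose the Jacobi identity on $(\E_2,\E_4,\E_6)$: expand the three double brackets using Leibniz, and collect coefficients on the independent monomials $\E_4^4$, $\E_4\E_6^2$ and $\E_2\E_4^2\E_6$ of the free commutative algebra $\CC[\E_2,\E_4,\E_6]$. This yields three scalar constraints that simplify to $2D=3B$, $4C=A(4-3B)$ and $A(3B+2)(B-4)=0$. The factored last equation cuts the admissible locus into three lines $\{A=0\}$, $\{B=-2/3\}$ and $\{B=4\}$, with $(C,D)$ determined on each.

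Matching against the explicit formulae in Propositions~\ref{prop_A}, \ref{prop_B} and \ref{prop_C}, these three lines are exactly $\parenth{\phantom{f},\phantom{g}}_\alpha$ (with $\alpha = B$), $\accol{\phantom{f},\phantom{g}}_\lambda$ (with $\lambda = 3A$) and $\scal{\phantom{f},\phantom{g}}_\mu$ (with $\mu = A$). Direct inspection then confirms the degenerate identifications $\accol{\phantom{f},\phantom{g}}_0 = \parenth{\phantom{f},\phantom{g}}_{-2/3}$ and $\scal{\phantom{f},\phantom{g}}_0 = \parenth{\phantom{f},\phantom{g}}_4$. Removing these points and invoking part (ii) of Propositions~\ref{prop_A} and~\ref{prop_C} collapses the remaining $\accol{\phantom{f},\phantom{g}}_\lambda$ for $\lambda\in\CC^*$ and $\scal{\phantom{f},\phantom{g}}_\mu$ for $\mu\in\CC^*$ to the representatives $\accol{\phantom{f},\phantom{g}}_1$ and $\scal{\phantom{f},\phantom{g}}_1$. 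Pairwise inequivalence of the three resulting classes is then read off from the Poisson-invariant data already computed in the three propositions (unimodularity, Jacobian character, and Poisson centres).

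The main obstacle is the Jacobi computation itself: one has to expand $\{\{\E_4,\E_6\},\E_2\} = -2\{\Delta,\E_2\}$ together with its two cyclic companions, apply Leibniz cleanly, and recognise that the resulting identity in $\CC[\E_2,\E_4,\E_6]$ reduces to three independent scalar equations on $A,B,C,D$. The pleasant feature is that the non-linear constraint factors over $\Q$ as $A(3B+2)(B-4)=0$, producing exactly the three lines that correspond to the three families constructed in the previous propositions and leaving no further cases to consider.
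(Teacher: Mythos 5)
Your classification computation is correct and is in substance the same as the paper's. You impose the Jacobi identity directly via the curl condition on the triple of generators; the paper instead invokes Oh's Poisson--Ore extension theorem, writing $\accol{\E_2,f}=\sigma(f)\E_2+\delta(f)$ and requiring $\sigma$ to be a Poisson derivation and $\delta$ a Poisson $\sigma$-derivation. These are equivalent, and indeed your equations $2D=3B$, $4C=A(4-3B)$, $A(3B+2)(B-4)=0$ are exactly the paper's $3\alpha=2\gamma$, $(3\alpha-4)\beta+4\epsilon=0$, $4\beta+(\alpha-2)\epsilon=0$ under the dictionary $B=\alpha$, $A=\beta$, $D=\gamma$, $C=\epsilon$; the three lines you obtain are the three families, with the same degenerate identifications $\scal{\phantom{f},\phantom{g}}_0=\parenth{\phantom{f},\phantom{g}}_4$ and (formally) $\accol{\phantom{f},\phantom{g}}_0=\parenth{\phantom{f},\phantom{g}}_{-2/3}$.

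The genuine gap is in your last sentence. The invariants you cite (unimodularity, Jacobian character, Poisson centre) do separate $\accol{\phantom{f},\phantom{g}}_1$ from $\scal{\phantom{f},\phantom{g}}_1$, and they separate both from $\parenth{\phantom{f},\phantom{g}}_\alpha$ for many $\alpha$, but not for all: when $\alpha\notin\Q$ or $\alpha=p/q$ with $p<0$, Proposition~\ref{prop_B} gives that $\parenth{\phantom{f},\phantom{g}}_\alpha$ has Poisson centre $\CC$ and is not unimodular --- exactly the same data as $\accol{\phantom{f},\phantom{g}}_1$. So ``reading off'' the inequivalence fails there. The paper closes this case by a direct argument: any Poisson modular isomorphism $\varphi$ must restrict to the identity on $\Mod_*$ by Proposition~\ref{prop_six}, hence $\varphi(\E_2)=\eta\E_2+F$ with $\eta\in\CC^*$, $F\in\Mod_*$; comparing $\varphi\left(\parenth{\E_2,\E_4}_\alpha\right)$ with $\accol{\varphi(\E_2),\varphi(\E_4)}_1$ forces $\alpha=-2/3$ and then the unsolvable equation $\frac{1}{3}\eta\E_4^2=-\frac{2}{3}F\E_6+2(\E_4^3-\E_6^2)\frac{\partial F}{\partial\E_6}$ (and similarly with $\scal{\phantom{f},\phantom{g}}_1$ in place of $\accol{\phantom{f},\phantom{g}}_1$). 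You need to supply an argument of this kind; without it the theorem's assertion that the listed brackets are pairwise non-isomorphic is not established.
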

\begin{remarkintro}
We could endow the algebra of modular forms with another Poisson structure \(\bi\). If we require \(\bi\left(\Mod_k,\Mod_\ell\right)\subset\Mod_{k+\ell+2}\), then \(\bi\) is necessarily defined by \(\bi(\E_4,\E_6)=\alpha\E_4^3+\beta\E_6^2\) for some complex numbers \(\alpha\) and \(\beta\). If \(\alpha\beta\neq 0\) then \(\left(\Mod_*,\bi\right)\) is Poisson isomorphic to \(\left(\Mod_*,\RC_1\right)\) and is indeed studied by this work. If \(\alpha\beta=0\), the Poisson algebras are no more Poisson isomorphic (they do not have the same group of automorphisms). This degenerate case deserves another study. %
\end{remarkintro}

\begin{remarkintro}
From an algebraic point of view, classifications of Poisson structures and associated (co)homology for polynomial algebras in two variables appear in \cite{MR1910942}, \cite{PiThese} or \cite{MR1900294} for a Poisson bracket on \(\CC[x,y]\) defined by \(\accol{x,y}=\varphi(x,y)\) with \(\varphi\) an homogeneous or a square free weight homogeneous polynomial in \(\CC[x,y]\). The algebra of modular forms \(\fMod=\CC[E_4,E_6]\) with the Poisson bracket defined by \(\RC_1\) is the case \(A_2\) in the classification theorem 3.8 in \cite{MR1910942}. Applying propositions 4.10 and 4.11 of \cite{PiThese}, or theorems 4.6 and 4.11 of  \cite{MR1910942}, we can deduce that the Poisson cohomology spaces \(\HP^1(\fMod)\) and \(\HP^2(\fMod)\) are of respective dimensions 1 and 2. In three variables, the Poisson structures on the algebra \(\fQM=\CC[E_2,E_4,E_6]\) of quasimodular forms arising from theorem C above do not fall under the classification of \cite{MR1086519} since there are not quadratic. The (co)homological study of \cite{PiThese} and \cite{MR2228339} does not apply to the brackets \(\accol{\phantom{x},\phantom{y}}_1\) and \(\parenth{\phantom{x},\phantom{y}}_\alpha\) since they are not Jacobian, or to the Jacobian bracket \(\scal{\phantom{x},\phantom{y}}_1\) because its potential \(k_1\) does not admit an isolated singularity at the origin. 
\end{remarkintro}
%.............................................
\section{Number theoretic and algebraic background}
\subsection{Quasimodular forms}
The aim of this section is to provide the necessary background on quasimodular forms. For details, the reader is advised to refer to~\cite{MR2409678} or~\cite{MR2186573}. On \(\sldz\), a modular form of weight \(k\in2\N\), \(k\neq 2\), is a holomorphic function on the Poincar\'e upper half plane \(\pk=\{z\in\CC\colon \Im z>0\}\) satisfying %
\[%
(cz+d)^{-k}f\left(\frac{az+b}{cz+d}\right)=f(z) %
\]
for any \(\bigl(\begin{smallmatrix}a & b\\ c & d\end{smallmatrix}\bigr)\in\sldz\) and having Fourier expansion %
\[%
f(z)=\sum_{n\geq 0}\widehat{f}(n)\eexp^{2\pi\ic nz}. %
\]
We denote by \(\Mod_k\) the finite dimensional space of modular forms of weight \(k\). The algebra of modular forms is defined as the graded algebra %
\[%
\Mod_*=\bigoplus_{\substack{k\in2\N\\ k\neq 2}}\Mod_k. %
\]
Let \(k\geq 2\) be even. We define the Eisenstein series of weight \(k\) by %
\[%
\E_k(z)=1-\frac{2k}{B_k}\sum_{n=1}^{+\infty}\sigma_{k-1}(n)\eexp^{2\pi\ic nz}. %
\]
Here the rational numbers \(B_k\) are defined by their exponential generating series %
\[%
\sum_{n=0}^{+\infty}B_n\frac{t^n}{n!}=\frac{t}{\eexp^t-1}
\]
and \(\sigma_{k-1}\) is the divisor function defined by %
\[%
\sigma_{k-1}(n)=\sum_{\substack{d\mid n\\ d>0}}d^{k-1} \qquad(n\in\N^*). %
\]
If \(k\geq 4\), the Eisenstein series \(\E_k\) is a modular form of weight \(k\) and \(\Mod_*\) is the polynomial algebra in the two algebraically independent Eisentsein series \(\E_4\) and \(\E_6\). In other words, %
\[%
\Mod_*=\CC[\E_4,\E_6],\quad \Mod_k=\bigoplus_{\substack{(i,j)\in\N^2\\ 4i+6j=k}}\CC\E_4^i\E_6^j. %
\]
However, the Eisenstein series \(\E_2\) is not a modular form. It satisfies %
\[%
(cz+d)^{-2}\E_2\left(\frac{az+b}{cz+d}\right)=\E_2(z)+\frac{6}{\pi\ic}\frac{c}{cz+d}\qquad(z\in\pk) %
\]
for any \(\bigl(\begin{smallmatrix}a & b\\ c & d\end{smallmatrix}\bigr)\in\sldz\). Moreover, the algebra of modular forms is not stable by the normalised complex derivation %
\[%
\Diff=\frac{1}{2\pi\ic}\frac{\dd}{\dd z}. %
\]
For example, we have the following Ramanujan differential equations %
\begin{align*}
\Diff\E_2 &= \dfrac{1}{12}\left(\E_2^2-\E_4\right)\\
\shortintertext{}
\Diff\E_4 &= \dfrac{1}{3}\left(\E_4\E_2-\E_6\right)\\
\shortintertext{}
\Diff\E_6 &= \dfrac{1}{2}\left(\E_6\E_2-\E_4^2\right). %
\end{align*}
To account for these observations, and using the fact that \(\E_2\), \(\E_4\) and \(\E_6\) are algebraically independent, we introduce the algebra \(\fQM\) of quasimodular forms defined as the polynomial algebra %
\[%
\fQM=\CC[\E_2,\E_4,\E_6]=\Mod_*[\E_2]. %
\]
More intrinsically, if for \(\gamma=\bigl(\begin{smallmatrix}a & b\\ c & d\end{smallmatrix}\bigr)\in\sldz\) we define %
\[%
\X(\gamma)=z\mapsto\frac{c}{cz+d} %
\]
and %
\[%
f\vert_k\gamma=z\mapsto(cz+d)^{-k}f\left(\frac{az+b}{cz+d}\right) %
\]
then a quasimodular form of weight \(k\in 2\N\) and depth \(s\in\N\) is a holomorphic function \(f\) on \(\pk\) such that there exist holomorphic functions \(f_0,\dotsc,f_s\) (\(f_s\neq 0\)) satisfying %
\[%
f\vert_k\gamma=\sum_{j=0}^sf_j\X(\gamma)^j %
\]
for any \(\gamma\in\sldz\). Moreover, it is required that any \(f_j\) has a Fourier expansion %
\[%
f_j(z)=\sum_{n\geq 0}\widehat{f_j}(n)\eexp^{2\pi\ic nz}\qquad(z\in\pk). %
\]
The zero function is supposed to have arbitrary weight and depth \(0\). We write \(\Mod_k^{\leq\infty}\) for the space of quasimodular forms of weight \(k\) and \(\Mod_k^{\leq s}\) for the space of quasimodular forms of weight \(k\) and depth less than or equal to \(s\). We have \(\Mod_k^{\leq 0}=\Mod_k\) and %
\[%
\Mod_k^{\leq s}=\bigoplus_{j=0}^s\Mod_{k-2j}\E_2^j,\qquad\fQM=\bigoplus_{k\in 2\N}\Mod_k^{\leq k/2}. %
\]
Moreover \(\Diff\Mod_k^{\leq s}\subset\Mod_{k+2}^{\leq s+1}\). Since the depth of a quasimodular form is nothing but its degree as a polynomial in \(\E_2\) with modular coefficients we note that %
\[%
\Mod_k^{\leq s}=\bigoplus_{j=0}^s\Mod_{k}^t,\qquad\fQM=\bigoplus_{k\in 2\N}\bigoplus_{t=0}^{k/2}\Mod_{k}^t %
\]
where %
\[%
\Mod_{k}^t=\Mod_{k-2t}\E_2^t=\bigoplus_{\substack{(i,j)\in\N^2\\ 4i+6j=k-2t}}\CC\E_4^i\E_6^j\E_2^t . %
\]
An important element in our study will be the discriminant function \(\Delta=\E_4^3-\E_6^2\). We note that \(\Diff\Delta=\Delta\E_2\). %

Let \(n\) be a non negative integer, \(f\) a modular form of weight \(k\) and \(g\) a modular form of weight \(\ell\). The \(n\)-th Rankin-Cohen bracket of \(f\) and \(g\) is %
\[%
\RC_n(f,g)=\sum_{r=0}^n(-1)^r\binom{k+n-1}{n-r}\binom{\ell+n-1}{r}\Diff^r f\Diff^{n-r}g.
\]
This is a modular form of weight \(k+\ell+2n\). If \(f\) and \(g\) are quasimodular forms of respective weights \(k\) and \(\ell\) and respective depths \(s\) and \(t\), their \(n\)-th Rankin-Cohen bracket is defined in~\cite{MR2568053} by %
\begin{equation}\label{def_MR}%
\RC_n(f,g)=\sum_{r=0}^n(-1)^r\binom{k-s+n-1}{n-r}\binom{\ell-t+n-1}{r}\Diff^r f\Diff^{n-r}g.
\end{equation}
This is a quasimodular form of weight \(k+\ell+2n\) and minimal depth (that is \(s+t\)).
%.............................................
\subsection{Poisson algebra}
The aim of this section is to give a brief account on what is needed about Poisson algebra. For more details, the reader is advised to refer to~\cite{MR2906391}.  A commutative \(\CC\)-algebra \(A\) is a Poisson algebra if there exists a bilinear skew-symmetric map \(\bi\colon A\times A\to A\) satisfying the two conditions: %
\begin{itemize}
\item Leibniz rule \(\bi(fg,h)=f\bi(g,h)+\bi(f,h)g\)
\item Jacobi identity \(\bi\left(f,\bi(g,h)\right)+\bi\left(g,\bi(h,f)\right)+\bi\left(h,\bi(f,g)\right)=0\) %
\end{itemize}
for all \(f\), \(g\) and \(h\) in \(A\). The bilinear map \(\bi\) is given the name of Poisson bracket. If \(A\) is a finitely generated algebra with generators \(x_1,\dotsc,x_N\), a Poisson bracket \(\bi\) is entirely determined by its values \(\bi(x_i,x_j)\) for \(i<j\) where \(A\) is generated by \(x_1,\dotsc,x_N\). More precisely, we have %
\begin{equation}\label{eq_jacob}%
\bi(f,g)=\sum_{0\leq i<j\leq N}\left(\frac{\partial f}{\partial x_i}\frac{\partial g}{\partial x_j}-\frac{\partial g}{\partial x_i}\frac{\partial f}{\partial x_j}\right)\bi(x_i,x_j) %
\end{equation}
for \(f\) and \(g\) expressed as polynomials in \(x_1,\dotsc,x_N\). %

If \(A=\CC[x,y]\), any \(p\in A\) determines a Poisson bracket satisfying \(\bi(x,y)=p\). However, if \(A=\CC[x,y,z]\), for any \(p\), \(q\) and \(r\) in \(A\), there exists a Poisson bracket on \(A\) defined by %
\[%
\bi(x,y)=r,\quad \bi(y,z)=p,\quad\text{and}\quad\bi(z,x)=q %
\]
if and only if %
\begin{equation}\label{eq_curlcondition}
\curl(p,q,r)\cdot(p,q,r)=0 %
\end{equation}
where %
\[%
\curl(p,q,r)=\left(\frac{\partial r}{\partial y}-\frac{\partial q}{\partial z},\frac{\partial p}{\partial z}-\frac{\partial r}{\partial x},\frac{\partial q}{\partial x}-\frac{\partial p}{\partial y}\right). %
\]
If condition~\eqref{eq_curlcondition} is satisfied, then \((p,q,r)\) is called a Poissonian triple.  A particular case is obtained if there exists \(k\in\CC[x,y,z]\) such that \(\curl(p,q,r)=(p,q,r)\wedge\grad k\). The bracket \(\bi\) is said then to be unimodular. Among unimodular brackets are the Jacobian brackets. A bracket \(\bi\) is Jacobian if \((p,q,r)=\grad k\) for some polynomial \(k\). The bracket \(\bi\) satisfies then %
\[%
\bi(f,g)=\jac(f,g,k)\qquad (f,g\in A)
\]
In this case, \(\CC[x,y,z]\) is said to have a Jacobian Poisson structure (JPS) of potential (or Casimir function) \(k\). The Poissonian triple  \((p,q,r)\)  is said then to be exact. %

The Poisson center (or zeroth Poisson cohomology group) of a Poisson algebra \(A\) is the Poisson subalgebra %
\[%
\HP^0(A)=\left\{g\in A\colon\bi(f,g)=0,\, \forall f\in A\right\}. 
\]
The Poisson center is contained in the Poisson centraliser of any element in the algebra: let \(f\in A\), its Poisson centraliser is %
\(%
\{g\in A\colon\bi(f,g)=0\}.
\)
The following lemma computes the Poisson center of polynomial algebras in three variables equipped with a Jacobian Poisson structure. It allows one to recover for example \cite[Poposition 4.2]{MR2228339} in the particular case where the potential is a weight homogeneous polynomial with an isolated singularity. A polynomial \(h\in\CC[x,y,z]\) is indecomposable if there is no polynomial \(p\in\CC[x]\) with \(\deg p\geq 2\) such that \(h=p\circ\ell\) for some \(\ell\in\CC[x,y,z]\). %
\begin{lemma}\label{lem_lemma0}
Let \(\CC[x,y,z]\) be endowed with a Jacobian Poisson structure of non constant potential \(k\). Its Poisson center is \(\CC[k]\) if and only if \(k\) is indecomposable. %
\end{lemma}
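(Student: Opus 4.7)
The plan is to recast membership in $\HP^0$ as an algebraic-dependence condition between $g$ and $k$, and then invoke a polynomial version of L\"uroth's theorem.

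First I would unpack the definition. Since the bracket is Jacobian of potential $k$, one has $g \in \HP^0$ if and only if $\jac(f,g,k)=0$ for every $f\in\CC[x,y,z]$. Specialising $f$ to $x$, $y$ and $z$ shows that this is equivalent to the vanishing of the three $2\times 2$ minors of the matrix with rows $\nabla g$ and $\nabla k$, i.e.\ to $\nabla g$ and $\nabla k$ being everywhere collinear. By the Jacobian criterion for polynomials, this is in turn equivalent to $g$ and $k$ being algebraically dependent over $\CC$.

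The "only if" direction is then quick. Suppose $k$ is decomposable, $k=p(\ell)$ with $\deg p\geq 2$; then $\nabla k=p'(\ell)\,\nabla \ell$, so $\ell\in\HP^0$ by the first step. If $\ell$ belonged to $\CC[k]$, writing $\ell=q(k)=q\circ p(\ell)$ would force $\deg p\cdot\deg q=1$, contradicting $\deg p\geq 2$. Hence $\ell\in\HP^0\setminus\CC[k]$ and the Poisson center is strictly larger than $\CC[k]$.

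For the converse, suppose $k$ is indecomposable and let $g\in\HP^0$. By the first step $g$ and $k$ are algebraically dependent. I would then invoke a polynomial version of L\"uroth's theorem, available in the theory of \emph{closed polynomials} (Nowicki, Schinzel, Ayad and others): the subset of polynomials of $\CC[x,y,z]$ algebraically dependent on $k$ is of the form $\CC[h]$ for some $h\in\CC[x,y,z]$ with $k=P(h)$ for a $P\in\CC[T]$. Indecomposability of $k$ forces $\deg P=1$, whence $\CC[h]=\CC[k]$ and $g\in\CC[k]$. The main obstacle is this last step, namely upgrading algebraic dependence of the two polynomials to a common polynomial parametrisation: either one cites the polynomial L\"uroth statement, or one argues directly by taking an irreducible relation $F(g,k)=0$ and using the collinearity of $\nabla g$ and $\nabla k$ together with indecomposability of $k$ to force the $g$-degree of $F$ to equal $1$.
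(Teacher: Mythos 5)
Your proposal is correct and follows essentially the same route as the paper: the decomposable case produces $\ell$ as a non-trivial central element exactly as in the paper's proof, and the converse reduces, via collinearity of $\nabla g$ and $\nabla k$, to the polynomial L\"uroth-type statement that two algebraically dependent polynomials lie in a common $\CC[q]$ (the paper cites Gutierrez--Sevilla, Theorem~6, for precisely this), after which indecomposability forces $\deg K=1$. The step you flag as the main obstacle is handled in the paper exactly by that citation, so nothing further is needed.
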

\begin{proof}
Assume that \(k\) is not indecomposable: \(k=p\circ\ell\) with \(p\in\CC[x]\), \(\deg p=2\). Then \(\jac(\ell,g,k)=(p'\circ\ell)\jac(\ell,g,\ell)\) hence \(\ell\) is in the Poisson center however not in \(\CC[k]\). Assume conversely that \(k\) is indecomposable. Let \(f\) be in the Poisson center, then the rank of the Jacobian matrix of \((f,g,k)\) is at most \(2\) for any \(g\). If it is \(1\) for any \(g\) then \(\grad f\) and \(\grad k\) are zero, it contradicts the fact that \(k\) is not constant. Hence, for some \(g\), the rank is \(2\). It follows (see e.g.~\cite[Theorem 6]{MR2267134}) that there exists \(q\in\CC[x,y,z]\), \(F\in\CC[x]\) and \(K\in\CC[x]\) such that \(f=F\circ q\) and \(k=K\circ q\). Since \(k\) is indecomposable and non constant, we have \(\deg K=1\) hence \(q\), and \(f\), are polynomials in \(k\). %
\end{proof}

If \(A\) and \(B\) are two Poisson algebras, with respective Poisson brackets \(\bi_A\) and \(\bi_B\), a map \(\varphi\colon A\to B\) is a morphism of Poisson algebras when it is a morphism of algebras that satisfies %
\[%
\varphi\left(\bi_A(f,g)\right)=\bi_B\left(\varphi(f),\varphi(g)\right) %
\]
for any \(f\) and \(g\) in \(A\). Two Poisson-isomorphic Poisson algebras have isomorphic Poisson centres. %

We detail now a canonical way to extend a Poisson structure from an algebra \(A\) to a polynomial algebra \(A[x]\). This construction is due to Sei-Qwon Oh~\cite{MR2220812}. A Poisson derivation of \(A\) is a derivation \(\sigma\) of \(A\) satisfying %
\[%
\sigma\left(\bi(f,g)\right)=\bi\left(\sigma(f),g\right)+\bi\left(f,\sigma(g)\right) %
\]
for all \(f\) and \(g\) in \(A\). If \(\sigma\) is a Poisson derivation of \(A\), a Poisson \(\sigma\)-derivation is a derivation \(\delta\) of \(A\) such that %
\[%
\delta\left(\bi(f,g)\right)=\bi\left(\delta(f),g\right)+\bi\left(f,\delta(g)\right)+\sigma(f)\delta(g)-\delta(f)\sigma(g) %
\]
for all \(f\) and \(g\) in \(A\). %
\begin{theorem}[Oh]\label{thm_un}
Let \((A,\bi_A)\) be a Poisson algebra. Let \(\sigma\) and \(\delta\) be linear maps on \(A\). The polynomial ring \(A[x]\) becomes a Poisson algebra with Poisson brackets \(\bi\) defined by %
\begin{align*}
\bi(f,g) &= \bi_A(f,g)\\
\bi(x,f) &= \sigma(f)x+\delta(f) %
\end{align*}
for all \(f\) and \(g\) in \(A\) if and only if \(\sigma\) is a Poisson derivation and \(\delta\) is a Poisson \(\sigma\)-derivation. In this case, the Poisson algebra \(A[x]\) is said to be a Poisson-Ore extension of \(A\). It is denoted \(A[x]_{\sigma,\delta}\). %
\end{theorem}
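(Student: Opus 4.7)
The plan is to first show that the prescribed data on the generators extends uniquely to a bilinear skew-symmetric map $\bi\colon A[x]\times A[x]\to A[x]$ satisfying the Leibniz rule, and then to verify the Jacobi identity. For the extension, I would use that any $F\in A[x]$ can be written uniquely as $F=\sum_{i\geq 0}f_ix^i$ with $f_i\in A$, impose the Leibniz rule (forced by the assertion that $\bi$ is a Poisson bracket), and use it together with skew-symmetry to compute $\bi(fx^m,gx^n)$ inductively from $\bi_A(f,g)$ and $\bi(x,f)$. Well-definedness and uniqueness of this extension are routine once one verifies that there is no clash between the two prescribed pieces of data on $A$ and on $\{x\}\times A$.

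The Leibniz rule applied to $\bi(fg,x)=-\bi(x,fg)$ on the one hand and $f\bi(g,x)+\bi(f,x)g$ on the other, using the prescription $\bi(x,h)=\sigma(h)x+\delta(h)$, gives the equality $\sigma(fg)x+\delta(fg)=\bigl(f\sigma(g)+g\sigma(f)\bigr)x+\bigl(f\delta(g)+g\delta(f)\bigr)$ for all $f,g\in A$. Separating the coefficient of $x$ and the constant term in $A[x]$ forces $\sigma$ and $\delta$ to be derivations of $A$. In the converse direction, if $\sigma$ and $\delta$ are already derivations, the inductive extension is consistent.

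For the Jacobi identity, I would reduce to checking the Jacobiator $J(a,b,c)=\bi(a,\bi(b,c))+\bi(b,\bi(c,a))+\bi(c,\bi(a,b))$ on triples of algebra generators, since $J$ is a triderivation as soon as $\bi$ is bilinear, skew-symmetric and Leibnizian. The triple $(f,g,h)\in A^3$ gives zero from the Jacobi identity of $\bi_A$, and $(x,x,f)$ gives zero by skew-symmetry. Only the case $(x,f,g)$ with $f,g\in A$ remains. Expanding each term with the Leibniz rule and the formula $\bi(x,h)=\sigma(h)x+\delta(h)$, then grouping by powers of $x$, I would obtain
\[
J(x,f,g)=\Bigl(\sigma\bigl(\bi_A(f,g)\bigr)-\bi_A(\sigma(f),g)-\bi_A(f,\sigma(g))\Bigr)x+R(f,g)
\]
where $R(f,g)=\delta(\bi_A(f,g))-\bi_A(\delta(f),g)-\bi_A(f,\delta(g))-\sigma(f)\delta(g)+\delta(f)\sigma(g)$. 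Since $A[x]$ is a free $A$-module, the coefficients of $x^0$ and $x^1$ must vanish separately, which is exactly the requirement that $\sigma$ be a Poisson derivation and $\delta$ a Poisson $\sigma$-derivation. This yields both implications simultaneously.

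The only real obstacle is the bookkeeping for the coefficient of~$x$ and the constant term in $J(x,f,g)$: it is a matter of carefully keeping track of signs coming from skew-symmetry when rewriting $\bi(f,\sigma(g)x+\delta(g))$ via Leibniz as $-\sigma(f)\sigma(g)x-\sigma(g)\delta(f)+\bi_A(f,\sigma(g))x+\bi_A(f,\delta(g))$, and similarly for the cyclic permutation, so that after cancellation of the $\sigma(f)\sigma(g)x$ terms the remaining coefficients are precisely the two defects to be annulled. Everything else is formal.
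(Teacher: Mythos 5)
Your proposal is correct, but note that the paper itself offers no proof of Theorem~\ref{thm_un}: it is quoted as a known result of Sei-Qwon Oh and used as a black box. So there is nothing internal to compare against; your argument is essentially the standard one from Oh's paper. The two pivots of your plan are sound: (1) the Jacobiator of a skew-symmetric biderivation is a derivation in each slot, so it suffices to test it on $A\cup\{x\}$, and the only nontrivial triple is $(x,f,g)$ with $f,g\in A$; (2) since $A[x]$ is a free $A$-module with basis $\{x^i\}$, the coefficients of $x^1$ and $x^0$ in $J(x,f,g)$ vanish separately, and I have checked that they are exactly $\sigma\bigl(\bi_A(f,g)\bigr)-\bi_A(\sigma(f),g)-\bi_A(f,\sigma(g))$ and your $R(f,g)$, after the two $\sigma(f)\sigma(g)x$ terms cancel as you say. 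Your derivation of the Leibniz constraints $\sigma(fg)=f\sigma(g)+g\sigma(f)$ and $\delta(fg)=f\delta(g)+g\delta(f)$ from $\bi(fg,x)$ is likewise correct and is genuinely needed, since the theorem only assumes $\sigma,\delta$ linear. The one step you wave at --- well-definedness of the Leibniz extension to all of $A[x]\times A[x]$ --- is most cleanly discharged by writing the closed formula
\[
\bi\Bigl(\sum_i f_ix^i,\sum_j g_jx^j\Bigr)=\sum_{i,j}\Bigl(\bi_A(f_i,g_j)x^{i+j}+\bigl(jf_i\sigma(g_j)-ig_j\sigma(f_i)\bigr)x^{i+j}+\bigl(jf_i\delta(g_j)-ig_j\delta(f_i)\bigr)x^{i+j-1}\Bigr)
\]
and verifying once that it is a skew-symmetric biderivation restricting to the prescribed data; with that in place your argument is complete and gives both implications simultaneously, as claimed.
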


We describe also a general process to obtain Poisson brackets from a pair of derivations. A pair \((\delta,\der)\) of two derivations of \(A\) is solvable if there exists some scalar \(\alpha\) such that \(\delta\circ\der-\der\circ\delta=\alpha\der\). In particular, a solvable pair \((\delta,\der)\) is abelian when \(\alpha=0\). %
\begin{proposition}\label{prop_generviasolvable}
Let \(A\) be a commutative algebra, \(\der\) and \(\delta\) two derivations of \(A\). Let \(\bi\colon A\times A\to A\) be defined by %
\[%
\bi(f,g)=\delta(f)\der(g)-\der(f)\delta(g)\qquad(f,g\in A).
\]
Then %
\begin{enumerate}[i)]
\item\label{pointi} the map \(\bi\) is bilinear skew-symmetric and satisfies Leibniz rule.
\item\label{previousassumption} If \((\delta,\der)\) is solvable, then \(\bi\) satisfies Jacobi identity and so becomes a Poisson bracket.
\item\label{pointiii} Under the hypothesis in~\ref{previousassumption}), \(\der\) is a Poisson derivation for \(\bi\). % 
\end{enumerate}
\end{proposition}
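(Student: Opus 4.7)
The plan is as follows. Point (i) is routine: bilinearity and skew-symmetry are read off the definition, while the Leibniz rule follows by expanding
\[
\bi(fg,h) = \bigl(\delta(f)g+f\delta(g)\bigr)\der(h) - \bigl(\der(f)g+f\der(g)\bigr)\delta(h)
\]
and regrouping as $g\bi(f,h)+f\bi(g,h)$.

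For point (ii), the strategy is a direct brute-force expansion of the cyclic Jacobi sum. Each summand $\bi(x,\bi(y,z))$ expands, via Leibniz applied to $\der$ and $\delta$ on the inner bracket, into eight trilinear terms involving one of $\der^2$, $\delta^2$, $\der\delta$, $\delta\der$ acting on a single argument together with first-order derivatives on the other two. Summing over the three cyclic permutations of $(f,g,h)$ produces twenty-four terms, which I would organize according to which argument carries the second-order operator. The purely $\der^2$ and purely $\delta^2$ contributions cancel in pairs directly by the commutativity of $A$. For the mixed-order contributions, the solvability hypothesis $\delta\der = \der\delta + \alpha\der$ allows me to replace every occurrence of $\delta\der$ by $\der\delta + \alpha\der$; after this substitution the pure $\der\delta$ parts cancel by the same antisymmetry argument as in the unmixed case, and the residual $\alpha$-terms, which involve only products of first derivatives, form a cyclically antisymmetric expression that also collapses to zero.

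Part (iii) is a short direct computation. Applying $\der$ to $\bi(f,g)$ by Leibniz and subtracting $\bi(\der(f),g)+\bi(f,\der(g))$ leaves exactly
\[
(\der\delta - \delta\der)(f)\,\der(g) - \der(f)\,(\der\delta - \delta\der)(g),
\]
which by solvability equals $-\alpha\der(f)\der(g)+\alpha\der(f)\der(g) = 0$.

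The main obstacle is purely notational: the twenty-four-term expansion in (ii) is tedious, and a disciplined grouping by which argument bears each second-order composition is essential. Once that grouping is fixed, all cancellations become mechanical, and the solvability constant $\alpha$ enters linearly through a single residue that vanishes by antisymmetry of the cyclic sum.
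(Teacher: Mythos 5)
Your proposal is correct and follows essentially the same route as the paper: the authors also prove~\ref{previousassumption}) by brute-force expansion of \(\bi(f,\bi(g,h))\) into trilinear tensor terms (using solvability to trade \(\delta\circ\der\) for \(\der\circ\delta+\alpha\der\)) and observe that each resulting pair cancels under the cyclic sum by commutativity, while~\ref{pointi}) and~\ref{pointiii}) are dismissed as immediate or direct computations of exactly the kind you write out.
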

\begin{proof}[\proofname{} of Proposition~\ref{prop_generviasolvable}]
Point~\ref{pointi}) is immediate. Point~\ref{previousassumption}) is a consequence of the following computation. If \((\delta,\der)\) is solvable with \(\delta\der-\der\delta=\alpha\der\) and if \(\Bi\colon A\otimes A\otimes A\otimes A\to A\) is defined by \(\Bi(f,g,h)=\bi\left(f,\bi(g,h)\right)\) then %
\begin{align*}
\Bi &= \alpha(\der\otimes\der\otimes\delta-\der\otimes\delta\otimes\der)\\
&\phantom{=}+\left(\delta\otimes(\der\circ\delta)\otimes\der-\der\otimes\delta\otimes(\der\circ\delta)\right)\\
&\phantom{=}+\left(\der\otimes(\der\circ\delta)\otimes\delta-\delta\otimes\der\otimes(\der\circ\delta)\right)\\
&\phantom{=}+(\delta\otimes\delta\otimes\der^2-\delta\otimes\der^2\otimes\delta)\\
&\phantom{=}+(\der\otimes\der\otimes\delta^2-\der\otimes\delta^2\otimes\der). %
\end{align*}
Point~\ref{pointiii}) is obtained by direct computation. %
\end{proof}
A direct consequence of this Proposition is the following Corollary. If \(A=\bigoplus_{n\geq 0}A_n\) is a commutative graded algebra, a map \(\kappa\colon A\to\CC\) is graded-additive if, for any \(k\) and \(\ell\), for any \(f\in A_k\) and any \(g\in A_\ell\), we have \(\kappa(fg)=\kappa(f)+\kappa(g)\). %%
\begin{corollary}\label{cor_trois}
Let \(A=\bigoplus_{n\geq 0}A_n\) be a commutative graded algebra. Let \(\kappa\colon A\to\CC\) be a graded-additive map. Let \(\der\) be a homogeneous derivation of \(A\) (there exists \(e\geq 0\) such that \(\der A_n\subset A_{n+e}\) for any \(e\geq 0\)). Then, the bracket defined on \(A\) by the bilinear extension of %
\[%
\bi(f,g)=\kappa(f)f\der(g)-\kappa(g)g\der(f)\qquad(f\in A_k,\, g\in A_\ell) %
\] 
is a Poisson bracket for which \(\der\) is a Poisson derivation. %
\end{corollary}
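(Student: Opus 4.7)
The plan is to deduce Corollary~\ref{cor_trois} directly from Proposition~\ref{prop_generviasolvable} by producing a second derivation $\delta$ for which the bracket of the statement is exactly $\bi(f,g)=\delta(f)\der(g)-\der(f)\delta(g)$ and such that the pair $(\delta,\der)$ is solvable.

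First, I would define $\delta\colon A\to A$ by $\delta(f)=\kappa(f)f$ on homogeneous $f\in A_k$, and extend by linearity. The graded-additivity of $\kappa$ gives, for $f\in A_k$ and $g\in A_\ell$,
\[
\delta(fg)=\kappa(fg)fg=\bigl(\kappa(f)+\kappa(g)\bigr)fg=\delta(f)g+f\delta(g),
\]
so $\delta$ is a derivation of $A$. By construction the bracket of the corollary coincides with $\delta(f)\der(g)-\der(f)\delta(g)$; hence the bilinearity, skew-symmetry, and Leibniz rule are granted by point~\ref{pointi}) of Proposition~\ref{prop_generviasolvable}.

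The substantive step is then verifying that $(\delta,\der)$ is solvable. Graded-additivity forces the value $\kappa_n$ taken by $\kappa$ on $A_n$ to satisfy $\kappa_{n+m}=\kappa_n+\kappa_m$, so $\kappa_n$ is an additive function of $n$. Since $\der$ sends $A_n$ into $A_{n+e}$, on a homogeneous $f\in A_n$ one computes
\[
(\delta\circ\der-\der\circ\delta)(f)=\kappa(\der(f))\der(f)-\kappa(f)\der(f)=(\kappa_{n+e}-\kappa_n)\der(f)=\kappa_e\,\der(f),
\]
and the scalar $\kappa_e$ does not depend on $n$. Extending linearly, $[\delta,\der]=\kappa_e\der$, so the pair is solvable. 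Parts~\ref{previousassumption}) and~\ref{pointiii}) of Proposition~\ref{prop_generviasolvable} then simultaneously deliver the Jacobi identity for $\bi$ and the fact that $\der$ is a Poisson derivation for $\bi$, which is the full content of the corollary.

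The only delicate point is the tacit reading of \textit{graded-additive} as forcing $\kappa$ to take a well-defined value $\kappa_n$ on each homogeneous component $A_n$; this is needed so that $[\delta,\der]$ collapses to a single scalar multiple of $\der$, rather than an $f$-dependent multiple. This reading is harmless in the intended applications, where $\kappa$ is the weight (Theorem~\ref{thm_A}) or the combination $k-(3\alpha+2)s$ on each bigraded piece $\Mod_k^s$ (Theorem~\ref{thm_B}).
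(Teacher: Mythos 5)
Your proof is correct and follows exactly the route the paper intends: the corollary is stated there as a ``direct consequence'' of Proposition~\ref{prop_generviasolvable} with no written proof, and you supply precisely the missing details by taking $\delta(f)=\kappa(f)f$ and checking that $(\delta,\der)$ is solvable with $[\delta,\der]=\kappa_e\der$. Your remark that graded-additivity must tacitly be read as giving a well-defined constant $\kappa_n$ on each $A_n$ (so that $\delta$ is linear and the commutator collapses to a single scalar multiple of $\der$) is a fair and accurate observation about the intended reading.
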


We turn on formal deformations of a commutative \(\CC\)-algebra \(A\). Assume we have a family \(\boldmu=(\mu_i)_{i\in\N}\) of bilinear maps \(\mu_i\colon A\times A\to A\) such that \(\mu_0\) is the product. Let \(A[[\hslash]]\) be the commutative algebra of formal power series in one variable \(\hslash\) with coefficients in \(A\). The family \(\boldmu\) is a formal deformation of \(A\) if the non commutative product on \(A[[\hslash]]\) defined by extension of %
\[%
f\ast g=\sum_{j\geq 0}\mu_j(f,g)\hslash^j\qquad(f,g\in A) %
\]
is associative. This condition is equivalent to %
\begin{equation}\label{associativity}
\sum_{r=0}^n\mu_{n-r}(\mu_r(f,g),h)=\sum_{r=0}^n\mu_{n-r}(f,\mu_r(g,h))\qquad{\text{ for all }} f,g,h\in A %
\end{equation}
for all \(n\geq 0\). In this case, the product \(\ast\) is called a star product. If \(\boldmu\) is a formal deformation and if moreover \(\mu_1\) is skew-symmetric and \(\mu_2\) is symmetric, then \(\left(A,\mu_1\right)\) is a Poisson algebra. %
%................................
\subsection{Problems at issue}
The first Rankin-Cohen bracket %
\[%
\RC_1(f,g)=kf\Diff(g)-\Diff(f)\ell g\qquad(f\in\Mod_k,\, g\in\Mod_\ell) %
\]
gives \(\Mod_*\) a structure of Poisson algebra. This is a consequence of Corollary~\ref{cor_trois}. Cohen, Manin \& Zagier~\cite{MR1418868} and Yao~\cite{Yao_PHD} or~\cite{MR2838685} proved that the family of Rankin-Cohen brackets is a formal deformation of \(\Mod_*\). In this case, the star product is called the Eholzer product. This subject has been widely studied then. See for example~\cite{MR1783553,MR2443940}. %

Can we construct formal deformations of \(\fQM\)? In other words, can we construct suitable families \((\mu_n)_{n\in\N}\) of bilinear maps on \(\fQM\) that increase the weight by \(2n\), preserve the depth and define an analogue of Eholzer product? The brackets defined in~\eqref{def_MR} do not lead to a solution since \(\RC_1\) does not even provide \(\fQM\) with a Poisson structure. Our first step is to obtain admissible Poisson brackets on \(\fQM\) with the following definition. %
\begin{definition}\label{dfn_quatre}
A Poisson bracket \(\bi\) on \(\fQM\) is admissible if %
\begin{enumerate}[1)]
\item\label{item_admi} \(\bi(f,g)=\RC_1(f,g)\) if \(f\) and \(g\) are in \(\Mod_*\)
\item\label{item_admii}  it satisfies \(\bi\left(\Mod_k^{\leq s},\Mod_\ell^{\leq t}\right)\subset\Mod_{k+\ell+2}^{\leq s+t}\) for all \(k,\ell,s,t\).%
\end{enumerate}
\end{definition}
\begin{remark}
We could have replace condition~\ref{item_admii} by the following one: there exists \(e\geq 0\) such that \(\bi\left(\Mod_k^{\leq s},\Mod_\ell^{\leq t}\right)\subset\Mod_{k+\ell+e}^{\leq s+t}\) for all \(k,\ell,s,t\). However, condition~\ref{item_admi} implies that necessarily \(e=2\). %
\end{remark}
Equivalently, a Poisson bracket \(\bi\) on \(\fQM\) is admissible if and only if %
\begin{align*}
\bi(\E_4,\E_6)&=-2\Delta\\
\bi(\E_2,\E_4)&\in\Mod_8^{\leq\infty},\quad \bi(\E_2,\E_6)\in\Mod_{10}^{\leq\infty}\\
\bi(\E_2,\Mod_*)&\subset\Mod_*\E_2+\Mod_*. %
\end{align*}
In order to classify the admissible Poisson brackets, we introduce the notion of Poisson modular isomorphism. %
\begin{definition}
A Poisson isomorphism \(\varphi\colon(\fQM,\bi_1)\to(\fQM,\bi_2)\) is called a Poisson modular isomorphism if \(\varphi(\Mod_*)\subset \Mod_*\). %
\end{definition}
Indeed, if \(\varphi\) is a Poisson modular isomorphism then, its restriction to the subalgebra \(\Mod_*\) is the identity. This is a consequence of the following Proposition. %
\begin{proposition}\label{prop_six}
The group of Poisson automorphisms of the Poisson algebra \((\Mod_*,\RC_1)\) is trivial. %
\end{proposition}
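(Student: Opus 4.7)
I would begin by observing that, for $f, g \in \fMod = \CC[\E_4, \E_6]$, the first Rankin-Cohen bracket admits the Jacobian expression $\RC_1(f,g) = -2\Delta\cdot\jac(f,g)$, with $\jac$ taken in $\E_4, \E_6$; this is verified on the pair $(\E_4, \E_6)$ via the Ramanujan equations and extended bilinearly. Consequently, the principal ideal $(\Delta)$ coincides with the ideal generated by the image of $\RC_1$, so it is stabilised by any Poisson automorphism $\varphi$. Writing $P = \varphi(\E_4)$ and $Q = \varphi(\E_6)$, this yields $P^3 - Q^2 = c\,(\E_4^3 - \E_6^2)$ for some $c \in \CC^*$ (the units of $\fMod$); comparing $\RC_1(P, Q) = -2\Delta\cdot\jac(P, Q)$ with $\varphi(\RC_1(\E_4, \E_6)) = -2c\Delta$ identifies $c$ with the constant Jacobian $\jac(P, Q)$, nonzero because $\varphi$ is a polynomial automorphism of $\CC[\E_4, \E_6]$.

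Next, I would exploit that $\varphi$ preserves the affine cuspidal cubic $V(\Delta) \subset \mathbb{A}^2$ scheme-theoretically. Since the origin is the unique singular point of $V(\Delta)$, $\varphi$ must fix $(0,0)$, and lifting $\varphi|_{V(\Delta)}$ along the normalisation $\mathbb{A}^1 \to V(\Delta)$, $t \mapsto (t^2, t^3)$, produces an automorphism of $\mathbb{A}^1$ fixing $0$: necessarily $t \mapsto \lambda t$ for some $\lambda \in \CC^*$. Hence $P \equiv \lambda^2\,\E_4$ and $Q \equiv \lambda^3\,\E_6 \pmod \Delta$, and I would write $P = \lambda^2\,\E_4 + \Delta A$ and $Q = \lambda^3\,\E_6 + \Delta B$ with $A, B \in \fMod$. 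Reducing both the equation $P^3 - Q^2 = c\Delta$ (after cancelling a factor $\Delta$) and the Jacobian equation $\jac(P, Q) = c$ modulo $\Delta$ then produces the two scalar-valued congruences
\[
\lambda^6 + 3\lambda^4\,\E_4^2 A - 2\lambda^3\,\E_6 B \equiv c \equiv \lambda^5 + 3\lambda^3\,\E_4^2 A - 2\lambda^2\,\E_6 B \pmod \Delta,
\]
and eliminating the $A, B$-terms between them forces $c(\lambda - 1) \equiv 0 \pmod \Delta$. Since this is a constant and $(\Delta) \cap \CC = 0$, it follows that $c(\lambda - 1) = 0$ and hence $\lambda = 1$.

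Substituting $\lambda = 1$ into the full equation $P^3 - Q^2 = c\Delta$ and cancelling $\Delta$ yields the polynomial identity
\[
c = 1 + (3\E_4^2 A - 2\E_6 B) + \Delta(3\E_4 A^2 - B^2) + \Delta^2 A^3;
\]
the constant term of the right-hand side is $1$, forcing $c = 1$, and the remaining identity would then force $A = B = 0$ by a degree argument. Concretely, I would examine the lowest weighted-degree components of $A, B$ (for the grading $\deg_w \E_4 = 4$, $\deg_w \E_6 = 6$) and use the UFD structure of $\CC[\E_4, \E_6]$, together with the irreducibility of $\Delta$, to rule out nontrivial polynomial solutions. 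Once $A = B = 0$, we have $P = \E_4$ and $Q = \E_6$, so $\varphi = \mathrm{id}$. The main technical obstacle will be this last degree analysis; the geometric reduction to $\lambda = 1$ via the normalisation of $V(\Delta)$ is essential in making it tractable.
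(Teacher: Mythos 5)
Your reduction steps are correct and take a genuinely different, more geometric route than the paper: the identity $\RC_1(f,g)=-2\Delta\jac(f,g)$, the invariance of the ideal $(\Delta)$, the relation $P^3-Q^2=c\Delta$ with $c=\jac(P,Q)\in\CC^*$, the normalisation of the cuspidal cubic giving $P\equiv\lambda^2\E_4$, $Q\equiv\lambda^3\E_6\pmod\Delta$, and the elimination forcing $\lambda=1$ and then $c=1$ are all sound. (The paper instead expands $s=\varphi(\E_4)$, $t=\varphi(\E_6)$ into weight-homogeneous components and uses the Cramer relations $2t=3x^2\partial_y s+2y\partial_x s$, $3s^2=3x^2\partial_y t+2y\partial_x t$ to bound the top weights of $s$ and $t$ by $4$ and $6$, which finishes everything in one stroke.)

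The genuine gap is the last step, and it is exactly where the content of the proposition lives. After $\lambda=c=1$ the remaining identity is
\[
3\E_4^2A-2\E_6B+\Delta\left(3\E_4A^2-B^2\right)+\Delta^2A^3=0,
\]
and the extremal weighted-degree components of this identity do \emph{not} rule out nonzero $(A,B)$. At the bottom end the cancellation condition is $3\E_4^2A_{a_0}=2\E_6B_{b_0}$, which has plenty of nonzero solutions (e.g.\ $A_{a_0}=\E_6 h$, $B_{b_0}=\tfrac32\E_4^2h$), so irreducibility of $\Delta$ buys nothing there. At the top end one is forced to $\Delta^2A_a^3=\Delta B_b^2$, i.e.\ $\Delta A_a^3=B_b^2$; the $\Delta$-adic valuation of the left side is $1+3v$, which can be even, and indeed $A_a=u\Delta h^2$, $B_b=u'\Delta^2h^3$ with $u^3=u'^2$ solves it for any weighted-homogeneous $h$. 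So ``examine the extremal components and use the UFD structure'' does not terminate, and at this point you have used the Jacobian condition $\jac(P,Q)=1$ only modulo $\Delta$. The conclusion $A=B=0$ is true, but it requires a genuine global input: either re-import the paper's degree bounds (the two PDEs above give $\deg_w\varphi(\E_4)\le 4$ and $\deg_w\varphi(\E_6)\le 6$, which is incompatible with a nonzero multiple of $\Delta$ appearing in $P$ or $Q$), or exploit the exact identity $\jac(P,Q)=1$ beyond its reduction mod $\Delta$, or argue on the generic fibre of $\E_4^3-\E_6^2$. As written, the final ``degree argument'' would fail.
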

\begin{proof}
Let \(\varphi\) be a Poisson automorphism of \(\Mod_*\). There exist two polynomials \(s\) and \(t\) in \(\CC[x,y]\) such that \(\varphi(\E_4)=s(\E_4,\E_6)\) and \(\varphi(\E_6)=t(\E_4,\E_6)\). By~\eqref{eq_jacob} we have \[\RC_1\left(\varphi(\E_4),\varphi(\E_6)\right)=\jac(s,t)(\E_4,\E_6)\cdot\RC_1(\E_4,\E_6).\] Since \(\varphi\) is an automorphism, \(\jac(s,t)\) is a non zero scalar, say \(\lambda\). We get %
\begin{equation}\label{eq_tsJe}%
\varphi\left(\RC_1(\E_4,\E_6)\right)=\lambda\RC_1(\E_4,\E_6) %
\quad\text{hence}\quad %
s^3-t^2=\lambda(x^3-y^2)\quad\text{in \(\CC[x,y]\).} %
\end{equation}
We develop \(s\) and \(t\) into homogeneous components with respect to the weight: %
\[%
s=\sum_{\substack{i=0\\ i\neq 1}}^ms_{2i}\quad\text{and}\quad t=\sum_{\substack{i=0\\ i\neq 1}}^nt_{2i}
\]
where %
\[%
s_{2i}=\sum_{\substack{(a,b)\in\N^2\\ 2a+3b=i}}\sigma_{a,b}x^ay^b\quad\text{and}\quad t_{2i}=\sum_{\substack{(a,b)\in\N^2\\ 2a+3b=i}}\tau_{a,b}x^ay^b\qquad(\sigma_{a,b},\,\tau_{a,b}\in\CC)
\]
for all i (where \(m=0\) or \(m\geq 2\) and \(n=0\) or \(n\geq 2\)).  Equation \eqref{eq_tsJe} implies that  \(t(E_4,E_6)^2-s(E_4,E_6)^3\) has weight \(12\). Then only three cases are possible. 
\begin{enumerate}[1)]
\item If \(3m>2n\) then \(m=2\) and so \(n\in\{0,2\}\). It implies that \(s=\sigma_{00}+\sigma_{10} x\) and \(t=\tau_{00}+\tau_{10} x\). This contradicts \(\jac(s,t)\neq 0\). %
\item If \(3m<2n\) then \(n=3\) and \(m=0\). This contradicts \(\jac(s,t)\neq 0\). %
\item If \(3m=2n\). We differentiate \eqref{eq_tsJe} with respect to \(x\) and \(y\) and get 
\[3s^2\dfrac{\partial s}{\partial x}-2t\dfrac{\partial t}{\partial x} = 3\lambda x^2,\qquad 3s^2\dfrac{\partial s}{\partial y}-2t\dfrac{\partial t}{\partial y} = -2\lambda y.\]
\end{enumerate}
This implies 
\begin{equation}\label{PDE}
2t = 3x^2\dfrac{\partial s}{\partial y}+2y\dfrac{\partial s}{\partial x},\qquad 
3s^2 = 3x^2\dfrac{\partial t}{\partial y}+2y\dfrac{\partial t}{\partial x}.
\end{equation}
From the first differential equation of \eqref{PDE} we have %
\[%
2t(E_4,E_6)=3E_4^2\dfrac{\partial s}{\partial y}(E_4,E_6)+2E_6\dfrac{\partial s}{\partial x}(E_4,E_6). %
\]
The highest weight of the right hand side is less than or equal to \(2m+2\). This implies \(n\leq m+1\). From the second differential equation of \eqref{PDE} we have %
\[%
3s^2(E_4,E_6) = 3E_4^2\dfrac{\partial t}{\partial y}(E_4,E_6)+2E_6\dfrac{\partial t}{\partial x}(E_4,E_6)%
\]
hence \(2m\leq n+1\). We deduce \((m,n)\in\{(0,0),(2,3)\}\). Since \(n=m=0\) would imply \(\jac(s,t)=0\) we have \(n=3\) and \(m=2\). Then \(s=\sigma_{00}+\sigma_{10}x\) and \(t=\tau_{00}+\tau_{10}x+\tau_{01}y\). The first differential equation in \eqref{PDE} implies that \(\tau_{00}=\tau_{10}=0\) and \(\tau_{01}=\sigma_{10}\) whereas the second one implies that \(\sigma_{00}=0\) and \(\sigma_{10}=1\). Finally \(\varphi(E_4)=s(E_4,E_6)=E_4\) and \(\varphi(E_6)=s(E_4,E_6)=E_6\). 
\end{proof}
Since  \(\RC_1(\Delta,f)=\left(12\Diff(f)-kf\E_2\right)\Delta\) for any \(f\in\Mod_k\), the first Rankin-Cohen bracket defines a derivation on \(\Mod_*\) called the Serre's derivative by linear extension of:
\begin{equation}\label{eq_defSerre}%
\vartheta f=\frac{\RC_1(\Delta,f)}{12\Delta}=\Diff(f)-\frac{k}{12}f\E_2\qquad(f\in\Mod_k). %
\end{equation}
This derivation is characterised by its values on the generators %
\[%
\vartheta\E_4=-\frac{1}{3}\E_6,\qquad \vartheta\E_6=-\frac{1}{2}\E_4^2. %
\]
We shall need the following result. %
\begin{proposition}\label{prop_sept}
The kernel of Serre's derivative is the Poisson centraliser of \(\Delta\) for the first Rankin-Cohen bracket. This is \(\CC[\Delta]\). %
\end{proposition}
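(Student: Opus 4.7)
The statement has two parts: that $\ker\vartheta$ coincides with the Poisson centraliser of $\Delta$ for $\RC_1$, and that this common space equals $\CC[\Delta]$. The first identity is immediate from the very definition~\eqref{eq_defSerre}: extended by bilinearity, it reads $\RC_1(\Delta,f)=12\Delta\,\vartheta(f)$ for every $f\in\Mod_*$, so since $\Mod_*$ is an integral domain, $\RC_1(\Delta,f)=0$ if and only if $\vartheta(f)=0$. The inclusion $\CC[\Delta]\subset\ker\vartheta$ is equally straightforward: $\vartheta(\Delta)=\Diff(\Delta)-\Delta\E_2=\Delta\E_2-\Delta\E_2=0$ and $\vartheta$ is a derivation, so it annihilates every polynomial in $\Delta$.

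The substantial point is the reverse inclusion $\ker\vartheta\subset\CC[\Delta]$. My plan is to exploit the relation $\E_4^3=\Delta+\E_6^2$ in order to decompose $\Mod_*$ as a free module of rank three over the polynomial ring $\CC[\Delta,\E_6]$. Algebraic independence of $\Delta$ and $\E_6$ is easily checked by specialising $\E_6\mapsto 0$ (then $\Delta$ becomes $\E_4^3$), and reducing exponents of $\E_4$ modulo $3$ then yields a unique decomposition
\[
f=f_0+f_1\E_4+f_2\E_4^2,\qquad f_0,f_1,f_2\in\CC[\Delta,\E_6],
\]
for every $f\in\Mod_*$; uniqueness is obtained by separating $\E_4$-exponents modulo $3$.

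Assuming $f\in\ker\vartheta$, the next step is to apply $\vartheta$ termwise in this decomposition, using $\vartheta\Delta=0$, $\vartheta\E_4=-\E_6/3$ and $\vartheta\E_6=-\E_4^2/2$, and then to substitute $\E_4^3=\Delta+\E_6^2$ in order to re-express $\vartheta f$ in the basis $\{1,\E_4,\E_4^2\}$. Equating each coefficient to zero yields three equations: first, $\partial_{\E_6}f_0=0$, hence $f_0\in\CC[\Delta]$; second, $3(\Delta+\E_6^2)\,\partial_{\E_6}f_1+2\E_6 f_1=0$; and third, $3(\Delta+\E_6^2)\,\partial_{\E_6}f_2+4\E_6 f_2=0$.

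The main (and only real) obstacle is now to show the last two polynomial differential equations in $\E_6$ with coefficients in $\CC[\Delta]$ admit no nonzero solution. My plan here is a leading-term argument: viewing $f_i$ as a polynomial in $\E_6$ of degree $N\geq 0$ over $\CC[\Delta]$ with leading coefficient $a(\Delta)\neq 0$, the coefficient of $\E_6^{N+1}$ on the left-hand side is $(3N+c_i)\,a(\Delta)$ with $c_1=2$ and $c_2=4$; since the integer factor never vanishes, this forces $a(\Delta)=0$, a contradiction, so $f_1=f_2=0$ and $f=f_0\in\CC[\Delta]$, which concludes the proof.
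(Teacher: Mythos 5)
Your proof is correct, but it follows a genuinely different route from the paper's. The paper argues analytically: for $f\in\Mod_k$ homogeneous, the condition $\vartheta f=0$ is rewritten as $kf\Diff(\Delta)=12\Delta\Diff(f)$, a first-order differential equation in $z$ which (since $\Delta$ is non-vanishing on $\pk$) forces $f\in\CC\Delta^{k/12}$ and $12\mid k$; this is a two-line argument but it uses the realization of modular forms as holomorphic functions and the homogeneity of $\ker\vartheta$. You instead stay entirely inside the polynomial algebra: the decomposition of $\CC[\E_4,\E_6]$ as the free $\CC[\Delta,\E_6]$-module with basis $\{1,\E_4,\E_4^2\}$ is valid (it is just $\CC[\E_4^3,\E_6]\oplus\CC[\E_4^3,\E_6]\E_4\oplus\CC[\E_4^3,\E_6]\E_4^2$ after the triangular substitution $\E_4^3=\Delta+\E_6^2$), your three coefficient equations are exactly what $\vartheta f=0$ gives after reduction modulo $\E_4^3=\Delta+\E_6^2$, and the leading-coefficient factors $3N+2$ and $3N+4$ indeed never vanish for $N\geq 0$. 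What your version buys is independence from any analytic input: it applies verbatim to the abstract derivation of $\CC[x,y]$ defined by $x\mapsto-\tfrac13 y$, $y\mapsto-\tfrac12 x^2$, and does not require reducing to homogeneous elements or knowing that $\Delta$ has no zeros. What the paper's version buys is brevity and a sharper graded statement, namely $\ker\vartheta\cap\Mod_k=\CC\Delta^{k/12}$ when $12\mid k$ and $0$ otherwise. One cosmetic point: your justification of the algebraic independence of $\Delta$ and $\E_6$ by specialising $\E_6\mapsto 0$ only kills the constant term in $\E_6$ and needs to be iterated; it is cleaner to say that $\CC[\Delta,\E_6]=\CC[\E_4^3,\E_6]$ and that $\E_4^3,\E_6$ are algebraically independent because $\E_4,\E_6$ are.
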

\begin{proof}
If \(f\in\Mod_k\) is in \(\ker\vartheta\) then \(kf\Diff(\Delta)=12\Delta\Diff(f)\). Solving the differential equation we find that \(12\) divides \(k\) and that \(f\in\CC\Delta^{k/12}\). %
\end{proof}
We note that, for any \(g\in\Mod_\ell\), we have %
\[%
\RC_1(\Delta^m,g)=m\Delta^m\left(12\Diff(g)-\ell g\E_2\right)
\]
and deduce that, for any \(f\in\CC[\Delta]\) and \(g\in\Mod_*\) we have %
\begin{equation}\label{eq_RCEuler}%
\RC_1(f,g)=12\xi(f)\vartheta(g)
\end{equation}
where \(\xi\) is the Eulerian derivative on \(\CC[\Delta]\) defined by \(\xi=\Delta\dfrac{\partial}{\partial\Delta}\). %
%.............................................
\section{Poisson structures on quasimodular forms}
\subsection{First family}
This section is devoted to the proof of Proposition~\ref{prop_A}. %

We fix \(\lambda\in\CC^*\) and introduce in \(\CC[x,y,z]\) the three polynomials %
\begin{align*}
r(x,y,z) &= \frac{1}{3}(\lambda y^2-2xz)\\
p(x,y,z) &=-2(y^3-z^2)\\
q(x,y,z) &=-\frac{1}{2}(\lambda yz-2xy^2). %
\end{align*}
Since \((p,q,r)\cdot\curl(p,q,r)=0\), we define a Poisson bracket on \(\fQM\) if we set %
\begin{align*}%
\accol{\E_4,\E_6}_\lambda &=p(\E_2,\E_4,\E_6)\\
\accol{\E_2,\E_4}_\lambda &=r(\E_2,\E_4,\E_6)\\
\accol{\E_6,\E_2}_\lambda &=q(\E_2,\E_4,\E_6). %
\end{align*}
Let us prove that \(\accol{\phantom{f},\phantom{f}}_\lambda\) is not unimodular. If it were, we would have \(k\in\CC[x,y,z]\) such that \(\curl(p,q,r)=(p,q,r)\wedge\grad k\). Identifying the first components would lead to %
\[%
\frac{7}{6}\lambda y=\frac{1}{2}(-\lambda yz+2y^2x)\frac{\partial k}{\partial z}-\frac{1}{3}(\lambda y^2-2zx)\frac{\partial k}{\partial y}
\]
that has no solution in \(\CC[x,y,z]\). %

A Poisson modular isomorphism \(\varphi_\lambda\) between \(\left(\fQM,\accol{\phantom{f},\phantom{f}}_\lambda\right)\) and  \(\left(\fQM,\accol{\phantom{f},\phantom{f}}_1\right)\) is determined by %
\[%
\varphi_\lambda(\E_2)=\lambda\E_2,\quad\varphi_\lambda(\E_4)=\E_4,\quad\varphi_\lambda(\E_6)=\E_6. %
\]

Finally, we determine the Poisson center of the Poisson algebra \(\left(\fQM,\accol{\phantom{f},\phantom{f}}_1\right)\). Let us define a derivation on \(\Mod_*\) by %
\(\sigma=2\vartheta\) (see~\eqref{eq_defSerre}) an a derivation on \(\Mod_*\) by linear extension of %
\[%
\delta(f)=\frac{k}{12}f\E_4\qquad (f\in\Mod_k).
\]
We note that \(\left(\fQM,\accol{\phantom{f},\phantom{f}}_1\right)\) is the Poisson-Ore extension \(\CC[\E_4,\E_6][\E_2]_{\sigma,\delta}\)
Now, consider any \(f\in\fQM\) written as %
\[%
f=\sum_{i=0}^sf_i\E_2^i,\qquad f_i\in\Mod_*. %
\]
We compute %
\[%
\accol{\E_2,f}_1=\delta(f_0)+\sum_{i=1}^s\left(\sigma(f_{i-1})+\delta(f_i)\right)\E_2^i+\sigma(f_s)\E_2^{s+1}. %
\]
If \(\accol{\E_2,f}_1=0\) then \(\delta(f_0)=0\) hence \(f_0\in\CC\) and \(\sigma(f_0)=0\). We obtain inductively that \(f_i\in\CC\) for all \(0\leq i\leq s\) so that the Poisson centraliser of \(\E_2\) is \(\CC[\E_2]\). Suppose that the Poisson center contains a non scalar element. Then it is in the Poisson centraliser of \(\E_2\) and can be written %
\[%
f=\sum_{j=0}^p\alpha_j\E_2^j,\qquad p\geq 1,\, \alpha_j\in\CC,\,\alpha_p\neq 0. %
\]
We compute %
\[%
\accol{\E_4,f}_1=\sum_{j=0}^pj\alpha_j\E_2^{j-1}\cdot\accol{\E_4,\E_2}_1 %
\]
and find that the coefficient of \(\E_2^p\) is non zero. It follows that \(f\) is not in the Poisson center. %
%.............................;
\subsection{Second family}\label{sec_troisdeux}
This section is devoted to the proof of Proposition~\ref{prop_B}. %
We fix \(\alpha\in\CC\) and introduce in \(\CC[x,y,z]\) the three polynomials %
\begin{align*}
r(x,y,z) &= \alpha xz\\
p(x,y,z) &=-2(y^3-z^2)\\
q(x,y,z) &=-\frac{3}{2}\alpha xy^2. %
\end{align*}
Since \((p,q,r)\cdot\curl(p,q,r)=0\), we define a Poisson bracket on \(\fQM\) if we set %
\begin{align*}%
\parenth{\E_4,\E_6}_\alpha &=p(\E_2,\E_4,\E_6)\\
\parenth{\E_2,\E_4}_\alpha &=r(\E_2,\E_4,\E_6)\\
\parenth{\E_6,\E_2}_\alpha &=q(\E_2,\E_4,\E_6). %
\end{align*}

Assume \(\alpha\neq 4\). Let us prove that \(\parenth{\phantom{f},\phantom{f}}_\alpha\) is not unimodular. If it were, we would have \(k\in\CC[x,y,z]\) such that \(\curl(p,q,r)=(p,q,r)\wedge\grad k\). Identifying the second components would lead to %
\[%
(4-\alpha)z=\alpha xz\frac{\partial k}{\partial x}+2(y^3-z^2)\frac{\partial k}{\partial z}
\]
that has no solution in \(\CC[x,y,z]\). %

If \(\alpha=4\), then \((p,q,r)=\grad k_0\) where \(k_0=-2(y^3-z^2)x\). As a consequence, the bracket \(\parenth{\phantom{f},\phantom{f}}_4\) provides \(\fQM\) with a Jacobian Poisson structure of potential \(k_0=-2\Delta\E_2=-2\Diff(\Delta)\). 

If \(\varphi\colon\left(\fQM,\parenth{\phantom{f},\phantom{f}}_\alpha\right)\to\left(\fQM,\parenth{\phantom{f},\phantom{f}}_{\alpha'}\right)\) is a Poisson modular isomorphism, let us prove that \(\alpha=\alpha'\). By Proposition~\ref{prop_six}, we have \(\varphi(\E_4)=\E_4\) and \(\varphi(\E_6)=\E_6\). By surjectivity, it follows that \(\varphi(\E_2)=\eta\E_2+F\) for some \(\eta\in\CC^*\) and \(F\in\Mod_*\). We compute %
\begin{align*}
\varphi\left(\parenth{\E_2,\E_4}_\alpha\right)&=\alpha\eta\E_6\E_2+\alpha\E_6F\\
\shortintertext{and}
\parenth{\varphi(\E_2),\varphi(\E_4)}_{\alpha'}&=\alpha'\eta\E_6\E_2+\parenth{F,\E_4}_{\alpha'}. %
\end{align*}
Since \(\parenth{F,\E_4}_{\alpha'}\in\Mod_*\) we get \(\alpha'=\alpha\). %

Finally, we determine the Poisson center of the Poisson algebra \(\left(\fQM,\parenth{\phantom{f},\phantom{f}}_\alpha\right)\). We note that \(\left(\fQM,\parenth{\phantom{f},\phantom{f}}_\alpha\right)\) is the Poisson-Ore extension \(\CC[\E_4,\E_6][\E_2]_{\sigma,\delta}\) where \(\sigma=-3\alpha\vartheta\) (see~\eqref{eq_defSerre}) and \(\delta=0\).
Let %
\[%
f=\sum_{j=0}^sf_j\E_2^j,\qquad f_j\in\Mod_*. %
\]
We have %
\[%
\parenth{\E_2,f}_\alpha=\sum_{j=0}^s\sigma(f_j)\E_2^{j+1} %
\]
hence \(f\) is in the Poisson centraliser of \(\E_2\) if and only each \(f_j\) is in the Poisson centraliser of \(\Delta\) for \(\RC_1\). By Proposition~\ref{prop_sept} we deduce that the centraliser of \(\E_2\) is \(\CC[\Delta,\E_2]\). Let %
\[
f=\sum_{j=0}^sf_j(\Delta)\E_2^j\in\CC[\Delta,\E_2]. %
\]
We use~\eqref{eq_RCEuler} to compute %
\[%
\parenth{f,\E_4}_\alpha=\sum_{j=0}^s\left(-4\xi(f_j)+j\alpha f_j\right)\E_6\E_2^j %
\]
and
\[%
\parenth{f,\E_6}_\alpha=\frac{3}{2}\sum_{j=0}^s\left(-4\xi(f_j)+j\alpha f_j\right)\E_4^2\E_2^j.  %
\]
We deduce that \(f\) is in the Poisson center of \(\parenth{\phantom{f},\phantom{f}}_\alpha\) if and only if %
\[%
\xi(f_j)=\frac{j\alpha}{4}f_j %
\]
for all \(j\), i.e. if and only if any \(f_j\) is of the form \(f_j=\lambda_j\Delta^{m_j}\) for some \(\lambda_j\in\CC\) and \(m_j\in\N\) such that \(j\alpha=4m_j\). If \(\alpha\notin\Q\) or if \(\alpha<0\) then \(j=0\) and \(m_j=0\) hence \(f=f_0\in\CC\). If \(\alpha=p/q\) with \(p\geq 1\), \(q\geq 1\) and \((p,q)=1\) then \(\lambda\Delta^{m_j}\E_2^j\) is in the Poisson center if and only if \(pj=4qm_j\). The result follows by obvious arithmetical consideration. Finally, if \(\alpha=0\), then \(\parenth{\phantom{f},\phantom{f}}_0\) is the trivial bracket and its Poisson center is \(\CC[\E_2]\). %
%...............................;
\subsection{Third family}
In this section, we study the third family, i.e. we prove Proposition~\ref{prop_C}. %

For any \(\mu\in\CC\), let us introduce %
\[%
k_\mu=-2\Delta\E_2+\mu\E_4^2\E_6. %
\]
Then %
\begin{align*}
\jac(\E_4,\E_6,k_\mu) &= \frac{\partial\mu_k}{\partial\E_2}=-2\E_4^3+\E_6^2\\
\jac(\E_2,\E_4,k_\mu) &= \frac{\partial\mu_k}{\partial\E_6}=4\E_6\E_2+\mu\E_4^3\\
\jac(\E_2,\E_6,k_\mu) &= -\frac{\partial\mu_k}{\partial\E_4}=6\E_4^2\E_2-2\mu\E_4\E_6. %
\end{align*}
The third family of Poisson bracket is then defined by \(\scal{f,g}_\mu=\jac(f,g,k_\mu)\). With the notation of Proposition~\ref{prop_B}, we have in particular \(\scal{f,g}_0=\parenth{f,g}_4\).%

For any \(\mu\in\CC^*\), we define a Poisson modular isomorphism \(\varphi_\mu\) between \(\left(\fQM,\scal{\phantom{f},\phantom{f}}_\mu\right)\) and \(\left(\fQM,\scal{\phantom{f},\phantom{f}}_1\right)\) by setting \(\varphi_\mu(\E_2)=\mu\E_2\), \(\varphi_\mu(\E_4)=\E_4\) and \(\varphi_\mu(\E_6)=\E_6\). %

Since the degree in \(\E_2\) of \(k_\mu\) as a polynomial in \(\E_2,\E_4,\E_6\) is \(1\), Lemma~\ref{lem_lemma0} implies that the Poisson center of \(\left(\fQM,\scal{\phantom{f},\phantom{f}}_\mu\right)\) is \(\CC[k_\mu]\). %
%...............................;
\subsection{Classification}
This section is devoted to the proof of Theorem~\ref{thm_C}. %

Let \(\accol{\phantom{f},\phantom{f}}\) be an admissible bracket on \(\fQM\). By Definition~\ref{dfn_quatre} and Theorem~\ref{thm_un} , there exist a Poisson derivation \(\sigma\) of \(\Mod_*\) and a Poisson \(\sigma\)-derivation \(\delta\) of \(\Mod_*\)  such that %
\[%
\accol{\E_2,f}=\sigma(f)\E_2+\delta(f)\qquad(f\in\Mod_*). %
\]
By definition, \(\sigma(\Mod_k)\subset\Mod_{k+2}\) and \(\delta(\Mod_k)\subset\Mod_{k+4}\) for any \(k\). The admissible bracket \(\accol{\phantom{f},\phantom{f}}\) is then defined by the four scalars \(\alpha\), \(\beta\), \(\gamma\) and \(\epsilon\) such that %
\[%
\sigma(\E_4)=\alpha\E_6,\, \delta(\E_4)=\beta\E_4^2,\, \sigma(\E_6)=\gamma\E_4^2,\text{and }\delta(\E_6)=\epsilon\E_4\E_6. %
\]
The condition that \(\sigma\) is a Poisson derivation imposes the condition %
\[%
\accol{\sigma(\E_4),\E_6}+\accol{\E_4,\sigma(\E_6)}=-2\sigma(\E_4^3-\E_6^2) %
\]
or equivalently \(3\alpha=2\gamma\). The condition that \(\delta\) is a Poisson \(\sigma\)-derivation imposes %
\[%
\delta\left(\accol{\E_4,\E_6}\right)=(2\beta+\epsilon)\E_4\accol{\E_4,\E_6}+\alpha\epsilon\E_4\E_6^2-\beta\gamma\E_4^4 %
\]
or equivalently %
\[%
\left\{%
\begin{aligned}
4\beta+(\alpha-2)\epsilon &=0\\
(3\alpha-4)\beta+4\epsilon &=0. %
\end{aligned}
\right.
\]
Either \(\beta=\epsilon=0\) is the only solution, or \(\alpha\in\{-2/3,4\}\) and \(\epsilon=\dfrac{4}{2-\alpha}\beta\). %
\begin{itemize}
\item The case \(\beta=\epsilon=0\) leads to the second family: \(\accol{\phantom{f},\phantom{f}}=\parenth{\phantom{f},\phantom{f}}_\alpha\).
\item The case \(\alpha=-2/3\) and \(\epsilon=3\beta/2\neq 0\) leads to the first family: \(\accol{\phantom{f},\phantom{f}}=\accol{\phantom{f},\phantom{f}}_{3\beta}\).
\item The case \(\alpha=4\) and \(\epsilon=-2\beta\neq 0\) leads to the third family: \(\accol{\phantom{f},\phantom{f}}=\scal{\phantom{f},\phantom{f}}_{\beta}\).
\end{itemize}
Using Propositions~\ref{prop_B} and~\ref{prop_C} we conclude that the only admissible Poisson brackets, up to Poisson modular-isomorphisms are \(\accol{\phantom{f},\phantom{f}}_1\), \(\scal{\phantom{f},\phantom{f}}_1\) and \(\parenth{\phantom{f},\phantom{f}}_\alpha\) for any \(\alpha\in\CC\). Looking at the centres, it is clear that the Poisson algebras \(\left(\fQM,\scal{\phantom{f},\phantom{f}}_1\right)\) and \(\left(\fQM,\accol{\phantom{f},\phantom{f}}_1\right)\) are not Poisson modular isomorphic. Suppose that there exists a Poisson modular isomorphism \(\varphi\) from \(\left(\fQM,\parenth{\phantom{f},\phantom{f}}_\alpha\right)\) to \(\left(\fQM,\accol{\phantom{f},\phantom{f}}_1\right)\). We know (see~\S~\ref{sec_troisdeux}) that %
\[%
\varphi(\E_4)=\E_4,\quad \varphi(\E_6)=\E_6,\quad\text{and}\quad \varphi(\E_2)=\eta\E_2+F %
\]
for some \(\eta\in\CC^*\) and \(F\in\Mod_*\). From \(\varphi\left(\parenth{\E_2,\E_4}_\alpha\right)=\accol{\varphi(\E_2),\varphi(\E_4)}_1\) we obtain %
\[%
\alpha\eta\E_6\E_2+\alpha F\E_6=-\frac{2}{3}\eta\E_6\E_2+\frac{1}{3}\eta\E_4^2+\accol{F,\E_4}_1 %
\]
hence %
\[%
 \begin{aligned}
\alpha&=-\frac{2}{3},\\
& \phantom{=-\frac{2}{3}}%
\end{aligned}
\quad
 \begin{aligned} \frac{1}{3}\eta\E_4^2 &= -\frac{2}{3}F\E_6-\accol{F,\E_4}_1\\
&= -\frac{2}{3}F\E_6+2(\E_4^3-\E_6^2)\frac{\partial F}{\partial\E_6} %
\end{aligned}
\]
by~\eqref{eq_jacob}. We get a contradiction. Replacing \(\accol{\phantom{f},\phantom{f}}_1\) by \(\scal{\phantom{f},\phantom{f}}_1\) we get %
\[%
\alpha=4,\quad \eta\E_4^2= 4F\E_6-2(\E_4^3-\E_6^2)\frac{\partial F}{\partial\E_6} %
\]
and again, we get a contradiction. %
%.............................................
\section{Star products on quasimodular forms}
\subsection{Extension of the first family}
This section is devoted to the proof of Theorem~\ref{thm_A}. We will use the following result of Zagier~\cite[Example 1]{MR1280058}. Let \(A=\bigoplus A_k\) be a commutative graded algebra with a derivation \(\der\) homogeneous of degree \(2\) (i.e. \(\der(A_k)\subset A_{k+2}\)). Let us define, for any \(f\in A_k,\ g\in A_\ell, \ r\geq 0\):
\begin{equation}\label{eq_verZ}
[f,g]_{d,r}=\sum_{i=0}^r(-1)^i\binom{k+r-1}{r-i}\binom{\ell+r-1}{i}\der^i(f)\der^{r-i}(g)\in A_{k+\ell+2r}.
\end{equation}
Then \(A\) equipped with these brackets is a Rankin-Cohen algebra, that  means that all algebraic identities satisfied by the usual Rankin-Cohen brackets on modular forms are also satisfied, in particular those expressing of the associativity of the corresponding star product. We obtain the following result.
\begin{theorem}\label{thm_huit}
The star product defined by %
\[
f\#g=\sum_{n\geq 0}[f,g]_{\der,n}\hslash^n %
\]
defines a formal deformation on \(A\). %
\end{theorem}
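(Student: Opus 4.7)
The plan is to verify the two conditions that make $\#$ a formal deformation of $A$: that the zeroth term recovers the commutative product, and that $\#$ is associative on $A[[\hslash]]$. The first is immediate from~\eqref{eq_verZ}, since for $f\in A_k$ and $g\in A_\ell$ the case $r=0$ yields $\binom{k-1}{0}\binom{\ell-1}{0}fg=fg$, so $\mu_0$ is the multiplication of $A$. Bilinearity of each $\mu_n=[-,-]_{\der,n}$ is visible from the formula. Everything therefore reduces to the associativity condition~\eqref{associativity} at each order $n\geq 0$.

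For associativity, my plan is to reduce to the classical setting of modular forms via Zagier's framework. Expanding~\eqref{associativity} at order $n$ through~\eqref{eq_verZ}, both sides become polynomial expressions in the iterated derivatives $\der^j(f),\der^j(g),\der^j(h)$ (for $j\leq n$), the weights $k,\ell,m$ of $f,g,h$ treated as formal parameters in binomial coefficients, and the commutative product of $A$. No further structure of $A$ is used. By Zagier's Example 1 in~\cite{MR1280058}, formula~\eqref{eq_verZ} applied to $(\Mod_*,\Diff)$ reproduces exactly the classical Rankin-Cohen brackets $\RC_n$. The resulting universal polynomial identity must therefore hold in $(A,[-,-]_{\der,n})$ as soon as it holds in $(\Mod_*,\RC_n)$, because both come from the single construction ``commutative graded algebra together with a degree-$2$ derivation''. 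Its validity in the modular case is precisely the associativity of the Eholzer product, established by Cohen-Manin-Zagier~\cite{MR1418868} and Yao~\cite{Yao_PHD}.

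The main obstacle is making the transfer step rigorous: one must argue that the associativity identity at each order $n$ is universal, that is, independent of any analytic input such as $q$-expansions or modular transformation laws. Concretely, writing out both sides of~\eqref{associativity} as $\CC$-linear combinations of monomials $\der^{a}(f)\der^{b}(g)\der^{c}(h)$ with coefficients that are polynomials in $k,\ell,m$, one checks that the validity of the identity for generic $k,\ell,m$ in $\Mod_*$ (where the triples $(\Diff^a(f),\Diff^b(g),\Diff^c(h))$ are algebraically independent for a generic choice of $f,g,h$) forces the coefficient polynomials to vanish identically, hence the identity holds in $A$. With this transfer principle secured — essentially the content of Zagier's formulation — the theorem follows without any additional computation.
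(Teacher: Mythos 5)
Your overall strategy coincides with the paper's: reduce the theorem to the assertion that the associativity identities~\eqref{associativity} for the brackets~\eqref{eq_verZ} are \emph{universal} algebraic identities in any graded commutative algebra with a degree-$2$ derivation. The paper disposes of this by citing Zagier's Example~1 in~\cite{MR1280058} (the notion of a standard Rankin--Cohen algebra), and later remarks that the whole theorem also follows from Connes--Moscovici's result~\eqref{eq_genCM} applied with $E=\der$ and $H(f)=\tfrac{k}{2}f$ on $A_k$, which satisfy $H\circ\der-\der\circ H=\der$ and make~\eqref{eq_genCM} specialise to~\eqref{eq_verZ}.

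The gap is in your transfer step, and it is a real one. As written, the justification is false: $\fQM=\CC[\E_2,\E_4,\E_6]$ has transcendence degree $3$, so for $n\geq 1$ the elements $\Diff^a(f),\Diff^b(g),\Diff^c(h)$ can never be algebraically independent once there are more than three of them. What your argument actually needs is the $\CC$-linear independence, inside $\Mod_{k+\ell+m+2n}^{\leq n}$, of the $\binom{n+2}{2}$ monomials $\Diff^a f\,\Diff^b g\,\Diff^c h$ with $a+b+c=n$, for at least one triple $(f,g,h)$ of each weight $(k,\ell,m)$ in a Zariski-dense set; only then do the matching coefficient polynomials in $(k,\ell,m)$ vanish identically. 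That independence is not automatic (for $f=g=h$ and $n=1$ the three monomials coincide) and you do not prove it for generic $(f,g,h)$. Note also that your logic runs backwards relative to the literature: Cohen--Manin--Zagier and Zagier establish the associativity identity \emph{formally} (via automorphic pseudodifferential operators, or equivalently via the Connes--Moscovici computation), and both the modular case and the general case are corollaries of that universal identity; deducing the universal identity from its validity on modular forms is the harder direction and is exactly the step left unproved here. The clean repair is to drop the transfer: either quote Zagier's theorem that $(A,\der)$ equipped with~\eqref{eq_verZ} is a standard Rankin--Cohen algebra, or verify $H\circ\der-\der\circ H=\der$ for $H=\tfrac12(\text{grading})$ and invoke Connes--Moscovici, as the paper's later remark explains.
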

In particular, we recover the fact, given by Corollary~\ref{cor_trois}, that \([\phantom{f},\phantom{g}]_{\der,1}\) is a Poisson bracket. Note also that this Theorem can be obtained from Connes \& Moscovici's result cited below (see~\S~\ref{sec_extfamdeux}). %

Let \(a\in\CC\) and \(\der_a\) be the homogeneous derivation of degree \(2\) on \(\fQM\) defined by %
\[%
\der_a(\E_2)=2a\E_2^2-\frac{1}{12}\E_4,\qquad \der_a(\E_4)=4a\E_4\E_2-\frac{1}{3}\E_6,\qquad \der_a(\E_6)=6e\E_6\E_2-\frac{1}{2}\E_4^2. %
\]
A direct computation proves that the two Poisson brackets \([\phantom{f},\phantom{f}]_{\der_a,1}\) and \(\accol{\phantom{f},\phantom{f}}_1\) coincide on generators hence are equal on \(\fQM\). %
\begin{remark}\label{rem_neuf}
A derivation \(\der\) on \(\fQM\) is complex-like if %
\(%
\der\Mod_k^{\leq s}\subset\Mod_{k+2}^{\leq s+1}
\)
for all \(k\) and \(s\). Let \(\pi\) be the derivation on \(\fQM\) defined by linear extension of %
\(%
\pi(f)=kf\E_2 %
\)
for all \(f\in\Mod_k^{\leq\infty}\). The set of complex-like derivations \(\der\) such that \([\phantom{f},\phantom{f}]_{\der,1}=0\) is the vector space of dimension \(1\) over \(\CC\) generated by \(\pi\). Let us define \(\dZ\) on \(\fQM\) by %
\[%
\dZ(f)=\frac{\accol{\Delta,f}_1}{12\Delta}. %
\]
Then, %
\[%
\der_a=\dZ+a\delta. %
\]
This implies in particular that if a complex-like derivation \(\der\) satisfies \([\phantom{f},\phantom{f}]_{\der,1}=\accol{\phantom{f},\phantom{f}}_1\) then \(\der=\der_a\) for some \(a\in\CC\). %
\end{remark}
Point~\ref{item_thmAii}) of Theorem~\ref{thm_A} is obtained by a direct application of Theorem~\ref{thm_huit}. We prove now~\ref{item_thmAiii}). The term of highest degree with respect to \(\E_2\) in \([\E_2,\E_4]_{\der_a,2}\) is \(8a^2\E_4\E_2^3\). This forces \(a=0\). Conversely, if \(a=0\) then \(\der_0\fQM\subset\Mod_*\). For any \(f=f_i\E_2^i\) with \(f_i\in\Mod_*\) we have %
\[%
\der_0(f)=\der_0(f_i)\E_2^i-\frac{1}{12}if_i\E_4\E_2^{i-1} %
\]
hence \(\deg_{\E_2}\der_0(f)\leq\deg_{\E_2}f\) and \(\deg_{\E_2}\der_0^j(f)\leq\deg_{\E_2}f\) for any \(f\in\fQM\) and \(j\geq 0\). This implies that %
\[%
[\Mod_k^{\leq s},\Mod_\ell^{\leq t}]_{\der_0,n}\subset\Mod_{k+\ell+2n}^{\leq s+t}.
\]
%...................................;
\subsection{Extension of the second family}\label{sec_extfamdeux}
The aim of this section is to prove Theorem~\ref{thm_B}. The proof of~\ref{item_thmBi}) is similar to the proof of~\ref{item_thmAi}) in Theorem~\ref{thm_A}. Let \(\mathcal{K} \colon\fQM\to\CC\) a graded-additive map. For any integer \(n\geq 0\), we define a bilinear application \([\phantom{f},\phantom{f}]_{\der,n}^{\mathcal{K}}\) by bilinear extension of%
\[%
[f,g]_{\der,n}^{\mathcal{K}}=\sum_{r=0}^n(-1)^r\binom{\mathcal{K}(f)+n-1}{n-r}\binom{\mathcal{K}(g)+n-1}{r}\der^rf\der^{n-r}g. %
\]
By Corollary~\ref{cor_trois}, we know that \([f,g]_{\der,1}^{\mathcal{K}}\) is a Poisson bracket. %

Let us fix \(\mathcal{K}_\alpha\) to be the linear extension on \(\fQM=\bigoplus_{k}\bigoplus_{s}\Mod_{k}^s\) of %
\begin{equation}\label{eq_defK}%
\mathcal{K}_\alpha(f)=\left(k-(3\alpha+2)s\right)\qquad(f\in\Mod_k^s).
\end{equation}
Let \(\pi_\alpha\) be the derivation on \(\fQM\) defined by \(\pi_\alpha(f)=\mathcal{K}_\alpha(f)f\E_2\) for all \(f\in\fQM\). The set of complex-like derivations such that \([\phantom{f},\phantom{f}]_{\der,1}^{\mathcal{K}_\alpha}=0\) is the vector space of dimension \(1\) over \(\CC\) generated by \(\pi_\alpha\).  %
Define derivations \(\dV\) and \(\delta_{\alpha,b}\) on \(\fQM\) by %
\[%
\dV(f)=\frac{\parenth{\Delta,f}_\alpha}{12\Delta} %
\]
and %
\[%
\delta_{\alpha,b}=\dV+b\pi_\alpha.
\]
Note that \(\dV\) does not depend on \(\alpha\). By comparing the values on the generators, it is immediate that \(\parenth{\phantom{f},\phantom{f}}_\alpha=[\phantom{f},\phantom{f}]_{\delta_{\alpha,b},1}^{\mathcal{K}_\alpha}\).
\begin{remark}
Direct computations show that if \(\der\) is a homogeneous derivation of degree \(2\) and \(\mathcal{K}\) is such that \(\parenth{\phantom{f},\phantom{f}}_\alpha=[\phantom{f},\phantom{f}]_{\der,1}^{\mathcal{K}}\) then we necessarily have \(\mathcal{K}=\mathcal{K}_\alpha\) and \(\der=\delta_{\alpha,b}\) for some \(b\in\CC\). %
\end{remark}

The condition that \([\E_4,\E_6]_{\delta_{\alpha,b},2}^{\mathcal{K}}\) has to be a modular form implies \(b=0\) or \(\alpha=-1/3\). For \(\alpha=-1/3\) condition~\eqref{associativity} for \(\mu_r=[\phantom{f},\phantom{g}]_{\delta_{\alpha,b},r}^{\mathcal{K}}\) and \(n=3\) is not satisfied (this can be done by using computer assistance, for example with \textsf{Sage}~\cite{sage}). We assume then that \(b=0\).

Connes \& Moscovici~\cite[Remark 14]{MR2074985} (see also~\cite[\S~II.2]{Yao_PHD} for a nice presentation of this result) proved that, if \(E\) and \(H\) are two derivations of an algebra \(R\) such that \(HE-EH=E\), then the applications \(\mu_n\colon R\times R\to R\) defined by %
\begin{equation}\label{eq_genCM}%
\mu_n(f,g)=\sum_{r=0}^n\frac{(-1)^r}{r!(n-r)!}[E^r\circ(2H+r)^{<n-r>}(f)]\cdot[E^{n-r}\circ(2H+n-r)^{<r>}(g)] %
\end{equation}
define a formal deformation on \(R\) with the notation %
\[%
F^{<m>}=F\circ(F+1)\circ(F+2)\circ\dotsm\circ(F+m-1). %
\]
Let \(\varpi\) be the derivation defined on \(\fQM\) by  %
\(%
\varpi(f)=\mathcal{K}(f)f. %
\)
Then we have %
\[%
\varpi\circ\delta_{\alpha,0}-\delta_{\alpha,0}\circ\varpi=2\delta_{\alpha,0}. %
\]
 We use Connes \& Moscovici's result with \(E=\delta_{\alpha,0}\) and \(H=\varpi/2\) to obtain %
\[%
\mu_n(f,g)=\sum_{r=0}^n(-1)^r\binom{k-(3\alpha+2)s+n-1}{n-r}\binom{\ell-(3\alpha+2)t+n-1}{r}\delta_{\alpha,0}^r(f)\delta_{\alpha,0}^{n-r}(g). %
\]
This implies Theorem~\ref{thm_B}.

\begin{remark}
We could have apply Connes \& Moscovici's result to extend the first family. Indeed Zagier's result is a consequence of Connes \& Moscovici's one. Let \(d\) be a derivation homogeneous of degree \(2\) of the commutative graded algebra \(A=\bigoplus A_k\). It is obvious that the linear map defined on each \(A_k\) by \(H(f)=\dfrac{k}{2}f\) is a derivation of \(A\). It is also clear that it satisfies \(H\circ d-d\circ H=d\). In particular, for any \(f\in A_k\) and \(g\in A_\ell\) we calculate %
\[%
(2H+r)^{<n-r>}(f)=\frac{(k+n-1)!}{(k+r-1)!}f %
\]
and
\[%
\left(2H+(n-r)\right)^{<r>}(g)=\frac{(\ell+n-1)!}{(\ell+n-r-1)!}g. %
\]
Hence a direct application of formula~\eqref{eq_genCM} gives formula~\eqref{eq_verZ}. %
\end{remark}
%...................................;
\subsection{Extension of the third family}\label{sec_extfamtrois}
We do not extend the third family since, for \(\mu\neq 0\), the bracket \(\scal{\phantom{\E_4},\phantom{\E_6}}_\mu\) has not the shape of a Rankin-Cohen bracket. More precisely, if there exists a function \(\kappa\colon\fQM\to\CC\) and a complex-like derivation \(\delta\) of \(\fQM\) such that %
\[%
\scal{f,g}_\mu=\kappa(f)f\delta(g)-\kappa(g)g\delta(f) %
\]
for all \(f\) and \(g\) in \(\fQM\), then \(\mu=0\). Indeed, assume \(\kappa\) and \(\delta\) exist, then %
\[%
\left\{%
\begin{aligned}
\delta(\E_2)&=A\E_2^2+B\E_4\\
\delta(\E_4)&=C\E_4\E_2+D\E_6\\
\delta(\E_6)&=E\E_6\E_2+F\E_4^2\\
\end{aligned}
\right.
\]
for some complex numbers \(A\), \(B\), \(C\), \(D\), \(E\) and \(F\). Since we know the values of \(\scal{\phantom{\E_4},\phantom{\E_6}}_\mu\) on the generators, we get a system relying  \(A\), \(B\), \(C\), \(D\), \(E\), \(F\), \(\kappa(\E_2)\),  \(\kappa(\E_4)\) and  \(\kappa(\E_6)\). It is not difficult to prove that this system has a solution if and only if \(\mu=0\). %
%.............................................
\newcommand{\etalchar}[1]{$^{#1}$}

%.............................................
\end{document}